\def\imod#1{\allowbreak\mkern10mu({\operator@font mod}\,\,#1)}
\newtheorem{theorem}{Theorem}[section]
\newtheorem{prop}[theorem]{Proposition}
\newtheorem{lemma}[theorem]{Lemma}
\newtheorem{corollary}[theorem]{Corollary}
\theoremstyle{definition}
\newtheorem{definition}[theorem]{Definition}
\theoremstyle{remark}
\newtheorem{remark}[theorem]{Remark}
\theoremstyle{remark}
\numberwithin{equation}{section}
     \DeclareMathOperator{\Aut}{Aut}
    \DeclareMathOperator{\lh}{lh}
    \DeclareMathOperator{\supp}{supp}
    \DeclareMathOperator{\ran}{ran}
    \DeclareMathOperator{\proj}{proj}
    \DeclareMathOperator{\Rep}{Rep}
     \DeclareMathOperator{\II}{II}
    \DeclareMathOperator{\vN}{vN}
    \DeclareMathOperator{\III}{III}
    \DeclareMathOperator{\I}{I}
    \DeclareMathOperator{\rep}{Rep}
    \newcommand{\actson}{\curvearrowright}
\def\R{{\mathbb R}}
\def\C{{\mathbb C}}
\def\N{{\mathbb N}}
\def\Q{{\mathbb Q}}
\begin{document}

\title[The conjugacy relation on unitary representations]{The conjugacy relation on unitary representations}

\author{Greg Hjorth{$^{\alpha,\beta}$}}

\author{Asger T\"ornquist$^{\gamma}$}

\thanks{$^\alpha$ Note from the second author: Greg Hjorth, who was my dear friend, colleague, and Ph.D. thesis advisor (at UCLA, 2001-2005), died unexpectedly on January 13, 2011, while we were finishing work on this paper.}

\thanks{$^\beta$ Greg Hjorth was supported by an Australian Professorial Fellowship from the Australian Research Council.}

\thanks{$^\gamma$ Asger T\"ornquist was supported in part by the Austrian Science Fund grant no. P 19375-N18, and a Marie Curie fellowship, grant no. IRG-249167, from the European Union.}

\subjclass[2010]{03E15, 03E75, 22D10, 22D25, 46L99}


\begin{abstract}
In a 1965 paper, E.G. Effros asked the question if the conjugacy relation for unitary representations of a locally compact second countable group is a Borel equivalence relation. In this paper we answer this question affirmatively. This also settles a recent question raised by A.S. Kechris regarding the complexity of unitary equivalence of probability measure preserving actions of countable discrete groups.
\end{abstract}

\maketitle

\section{Introduction}

Let $H$ be an infinite dimensional separable complex Hilbert space with inner product denoted by $\langle\cdot,\cdot\rangle$ and norm $\|\cdot\|$. As usual, let $\mathcal B(H)$ denote the set of bounded operators on $H$. Let $U(H)$ denote the unitary group of $H$, which we equip with the strong (equivalently, weak) operator topology\footnote{We refer to \cite{pedersen89} for basic definitions.}. Recall that $U(H)$ is a Polish group when given this topology.

Let $\Gamma$ be a discrete countable group. A unitary representation of $\Gamma$ is a homomorphism $\pi:\Gamma\to U(H)$. Define
$$
\Rep(\Gamma,H)=\{\pi:\Gamma\to U(H): (\forall\gamma_1,\gamma_2\in\Gamma): \pi(\gamma_1\gamma_2)=\pi(\gamma_1)\pi(\gamma_2)\},
$$
the \emph{space of unitary representations of $\Gamma$ on $H$}, which we equip with the subspace topology inherited from $U(H)^\Gamma$, when the latter is given the product topology. Since $\Rep(\Gamma,H)$ is a closed subset of $U(H)^\Gamma$, $\Rep(\Gamma,H)$ is a Polish space. For notational ease, we usually write $\pi_\gamma$ for $\pi(\gamma)$, when $\pi\in\Rep(\Gamma,H)$.

We say that two representations $\sigma,\pi\in\Rep(\Gamma,H)$ are \emph{conjugate} if there is $U\in U(H)$ such that $U\sigma_\gamma U^{-1}=\pi_\gamma$ for all $\gamma\in\Gamma$, and we write $\sigma\simeq\pi$ if this is the case. It is clear that the conjugacy relation $\simeq$ is induced by the natural conjugation action of $U(H)$ on $\Rep(\Gamma,H)$, and so $\simeq$ is an analytic equivalence relation, i.e., an analytic subset of $\Rep(\Gamma,H)^2$.

The question was raised by Effros in \cite[pp. 1163--1164]{effros65} if $\simeq$ is in fact Borel. The purpose of this paper is to prove that this is indeed the case:

\begin{theorem}\label{mainthm1}
The conjugacy relation in $\rep(\Gamma, H)$ is $F_{\sigma\delta}$.
\end{theorem}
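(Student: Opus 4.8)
The strategy I would pursue is to reduce conjugacy of unitary representations to a statement that can be verified using countably many "local" conditions, each of which is Borel, combined in an $F_{\sigma\delta}$ pattern. The key idea is that two representations $\sigma,\pi \in \Rep(\Gamma,H)$ are conjugate if and only if there is an isometry $U$ intertwining them, and the existence of such a $U$ can be phrased in terms of matching up the "matrix coefficients" or, better, in terms of an approximate intertwining condition that becomes exact in the limit. Concretely, I would fix a countable dense subset $D = \{d_n : n \in \N\} \subseteq H$ (say the rational-linear span of a fixed orthonormal basis), and try to express $\sigma \simeq \pi$ as: for every $\varepsilon > 0$ (rational) and every finite $F \subseteq \Gamma$ and finite $S \subseteq D$, there exists a partial isometry-like finite matrix of inner products that approximately intertwines $\sigma\restrict F$ and $\pi\restrict F$ on $S$. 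The quantifier structure "$\forall \varepsilon\, \forall F\, \forall S\, \exists(\text{finite data})$" is exactly $\forall\exists$ over countable sets, i.e. $F_{\sigma\delta}$ — provided the innermost matrix condition is Borel (in fact it should be closed or $F_\sigma$), and provided the approximate-intertwining statement genuinely captures conjugacy.

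**Key steps, in order.** (1) Show that $\sigma \simeq \pi$ iff there is a linear isometry $U : H \to H$ onto $H$ with $U\sigma_\gamma = \pi_\gamma U$; this is immediate. (2) Replace "onto isometry" by a symmetric pair of conditions so that $U$ and a candidate for $U^{-1}$ appear on equal footing — i.e. express conjugacy as the existence of a sequence of vectors $(\xi_n)$ (images of the $d_n$) and $(\eta_n)$ (images under the inverse) satisfying the inner-product relations $\langle \xi_m, \xi_n\rangle = \langle d_m, d_n\rangle$, $\langle \pi_\gamma \xi_m, \xi_n\rangle = \langle \sigma_\gamma d_m, d_n\rangle$ for $\gamma \in \Gamma$, plus the surjectivity/density bookkeeping linking the $\eta_n$ back. (3) Apply a selection or compactness argument — essentially that the space of such tuples $(\xi_n)_{n}$ is a closed subset of a product of closed balls in $H$ (weakly compact), so that a tuple exists iff finite approximations exist, uniformly. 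This is the step that converts the $\Sigma^1_1$ "$\exists U$" into the arithmetical "$\forall$ finite approximation $\exists$ witness." (4) Verify that each finite-approximation clause defines a closed (hence Borel) subset of $\Rep(\Gamma,H)^2$, and assemble the whole thing as a countable intersection of countable unions of closed sets, giving $F_{\sigma\delta}$.

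**The main obstacle.** The genuinely delicate point is step (3): passing from "finite approximate intertwiners exist" to "a genuine intertwiner exists." Weak compactness of balls in $H$ gives a weak limit of the approximate image vectors $\xi_n$, but one must ensure that (a) the limit vectors still satisfy the inner-product equations exactly — this is fine because those are weakly continuous in each variable once the other is fixed, but one must be careful running the diagonal argument over all pairs $(m,n)$ and all $\gamma$ simultaneously — and, more seriously, (b) the resulting isometry $U : \overline{\Span}\{d_n\} = H \to H$ has dense range, i.e. is actually onto. Surjectivity is not a closed condition and is the usual reason naive approximation arguments fail here; the fix is to build the approximations to be "doubly indexed," simultaneously approximating $U$ on an exhausting sequence of finite sets and approximating a left inverse, so that in the limit one obtains a two-sided isometric intertwiner. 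Making this doubly-indexed approximation condition itself Borel (and checking it is equivalent to conjugacy, with no set-theoretic slippage) is where the real work lies; everything else is bookkeeping with the quantifier complexity. I would expect the final statement to come out as $F_{\sigma\delta}$ precisely because one needs one layer of $\forall$ (over $\varepsilon, F, S$ and the exhaustion index) wrapped around one layer of $\exists$ (over finite rational approximating data), and no more.
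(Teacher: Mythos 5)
Your quantifier analysis is right, and your definition of the approximate condition is essentially the relation $S_0$ the paper uses: ``for all $\varepsilon$ and all finite $\vec\xi$ there exists a contractive intertwiner $T$ with $|\langle\xi_i,\xi_j\rangle-\langle T\xi_i,T\xi_j\rangle|<\varepsilon$,'' imposed symmetrically in $\sigma$ and $\pi$. The complexity bookkeeping also works out as you say: the inner clause is $F_\sigma$ in the weak topology on $\mathcal B^1(H)$ (not closed --- note that $T\mapsto\langle T\xi,T\eta\rangle$ is only weakly $F_\sigma$, not weakly continuous), and projecting along the weakly compact unit ball preserves $F_\sigma$, giving $F_{\sigma\delta}$ overall. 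But there is a genuine gap at your step (3), which you correctly flag as the delicate point: the fix you propose --- doubly indexed approximations of $U$ together with a left inverse, followed by a weak limit --- does not work. Weak limits destroy exactly the information you need: a sequence of unitaries can converge weakly to $0$; the map $T\mapsto\langle T\xi,T\eta\rangle$ is not weakly continuous, so a weak limit of approximate isometries need not be isometric; and operator multiplication is not jointly weakly continuous, so from $V_nU_n\approx 1$ on finite sets and $U_n\to T$, $V_n\to S$ weakly you cannot conclude $ST=1$. There is no compactness available in the strong topology to rescue this, so no diagonal argument of this shape produces the conjugating unitary.

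The paper's proof never takes a limit of the approximate intertwiners. Instead it exploits the von Neumann algebra structure of the commutant-type sets $R_{\sigma,\pi}$: by polar decomposition, each approximate contractive intertwiner yields a \emph{partial isometry} $u\in R_{\sigma,\pi}$ whose initial projection $u^*u$ is close to $1$ on the given finite set of vectors; the symmetric hypothesis then says the initial projections tend strongly to $1$ from both sides. One then reduces to primary (factor) representations via the direct integral decomposition and its essential uniqueness, and in a factor uses Murray--von Neumann comparison of projections (and the trace in the finite case) to rearrange these partial isometries into mutually orthogonal pieces $r_i\sim\bar r_i$ summing to $1$, whose sum is an exact isometry witnessing $\sigma\precsim\pi$. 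The two-sidedness is then consumed not by a limit but by a Schr\"oder--Bernstein argument: $\sigma\precsim\pi$ and $\pi\precsim\sigma$ force $\sigma\simeq\pi$ because a finite factor admits no proper isometries in its commutant (and the properly infinite and type III cases are handled separately by the comparison theory). So the missing ingredient in your outline is precisely this algebraic machinery --- polar decomposition, comparison of projections, and direct integral decomposition into disjoint primary components --- which replaces the limit argument that cannot be made to work.
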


This also answers a question raised by Kechris in \cite[p. 123, (IIIb) and (IVb)]{kechris10}, who asked if unitary equivalence of probability measure preserving actions of $\Gamma$ is Borel. We note that Theorem \ref{mainthm1} stands in contrast to the recent result of Foreman, Rudolph and Weiss \cite{foruwe}, who have shown that the conjugacy relation on ergodic transformations in the group of measure preserving transformations $\Aut(X,\mu)$ is complete analytic, and so cannot be Borel, when $(X,\mu)$ is a standard non-atomic probability space.

Theorem \ref{mainthm1} applies more generally to representations of \emph{separable involutive Banach algebras} in the sense of \cite[Appendix B]{nielsen80}, and so in particular it applies to separable C$^*$-algebras and to unitary representations of second countable locally compact groups. If $\Gamma$ is a separable involutive Banach algebra, then we let $\rep(\Gamma,H)$ denote the set of $*$-homomorphisms from $\Gamma$ to $\mathcal B(H)$. We give $\rep(\Gamma,H)$ the initial topology induced by the maps
$$
\sigma\mapsto \sigma_\gamma(\xi),
$$
ranging over all $\gamma\in \Gamma$ and $\xi\in H$. It can be seen that $\rep(\Gamma,H)$ is Polish in this topology. The (unitary) conjugacy relation $\simeq$ in $\rep(\Gamma,H)$ is defined as in the case of countable discrete groups, and we have:

\begin{theorem}\label{mainthm2}
Let $\Gamma$ be a separable involutive Banach algebra. Then the conjugacy relation in $\rep(\Gamma, H)$ is $F_{\sigma\delta}$.
\end{theorem}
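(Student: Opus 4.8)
**Proof proposal for Theorem \ref{mainthm2} (reducing the Banach algebra case to the discrete group case, or handling it directly).**

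The plan is to show that the essential content is already captured by Theorem \ref{mainthm1}: representations of a separable involutive Banach algebra $\Gamma$ are determined, up to conjugacy, by their restriction to a countable dense subset, and conjugacy of the restrictions (appropriately interpreted) is an $F_{\sigma\delta}$ condition. More precisely, fix a countable $\mathbb Q$-subalgebra (or merely a countable $\|\cdot\|$-dense subset closed under the involution and rational-linear combinations) $\Gamma_0\subseteq\Gamma$. First I would observe that the map $\sigma\mapsto\sigma\restrict\Gamma_0$ from $\rep(\Gamma,H)$ into $\mathcal B(H)^{\Gamma_0}$ is a topological embedding with closed image, by definition of the initial topology and continuity of the algebra operations and of $\gamma\mapsto\|\gamma\|$; indeed a function $\tau:\Gamma_0\to\mathcal B(H)$ extends to a $*$-homomorphism on $\Gamma$ precisely when it is contractive and respects the rational algebra and involution structure, and all of these are closed conditions in the relevant topology (using that $\|\tau(\gamma)\|\le\|\gamma\|$ is weak-operator closed on bounded balls, and that the ball of radius $\|\gamma\|$ in $\mathcal B(H)$ is weak-operator closed). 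Consequently $U\sigma_\gamma U^{-1}=\pi_\gamma$ holds for all $\gamma\in\Gamma$ iff it holds for all $\gamma\in\Gamma_0$, by continuity of conjugation and density, so $\sigma\simeq\pi$ in $\rep(\Gamma,H)$ iff the two countable families $(\sigma_\gamma)_{\gamma\in\Gamma_0}$ and $(\pi_\gamma)_{\gamma\in\Gamma_0}$ are simultaneously conjugated by a single unitary.

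Next I would argue that the proof of Theorem \ref{mainthm1} never used that $\Gamma$ is a group: what is actually proved there is a statement about when two countable families $(a_\gamma)_{\gamma\in\Gamma_0}$, $(b_\gamma)_{\gamma\in\Gamma_0}$ of operators on $H$ admit a common unitary intertwiner, and the $F_{\sigma\delta}$ complexity bound is extracted from an analysis of the closure of a countable set of ``approximate intertwining'' conditions involving finitely many group elements and finitely many basis vectors at a time. The same analysis applies verbatim when $\Gamma_0$ is just a countable set of elements of $\mathcal B(H)$-valued coordinates, provided these coordinates are uniformly bounded in norm so that the relevant balls are weak-operator compact (this is exactly guaranteed by $\|\sigma_\gamma\|\le\|\gamma\|$ for $\sigma\in\rep(\Gamma,H)$). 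Thus I would state the key lemma from the proof of Theorem \ref{mainthm1} in the form: for any countable index set $I$ and any bound $M:I\to[0,\infty)$, the relation ``$\exists U\in U(H)\ \forall i\in I\ Ua_iU^{-1}=b_i$'' on pairs $((a_i),(b_i))\in\prod_{i\in I}B_{M(i)}\times\prod_{i\in I}B_{M(i)}$ (where $B_r$ is the norm-$r$ ball in $\mathcal B(H)$ with the weak operator topology) is $F_{\sigma\delta}$; then Theorem \ref{mainthm2} follows by pulling this back along the embedding $\sigma\mapsto(\sigma_\gamma)_{\gamma\in\Gamma_0}$ and intersecting with the closed set $\rep(\Gamma,H)\times\rep(\Gamma,H)$.

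I expect the only real work is bookkeeping: verifying that the topology on $\rep(\Gamma,H)$ really is the subspace topology from $\prod_{\gamma\in\Gamma_0}B_{\|\gamma\|}$ (not a coarser one), that this subspace is closed, and — most importantly — confirming that the proof of Theorem \ref{mainthm1} is genuinely stated or can be re-stated at the level of generality of bounded-operator-valued families rather than unitary-group-valued families, since the earlier argument may have leaned on unitarity of the $\pi_\gamma$ in places (e.g. to get compactness of $U(H)^{\Gamma_0}$-orbits or to pass between strong and weak topologies). The main obstacle, then, is not a new idea but the careful check that the normed-ball structure $B_{\|\gamma\|}$ supplies every compactness/closedness property that unitarity supplied in the group case; if the proof of Theorem \ref{mainthm1} was already organized around a lemma of the boxed form above, this check is immediate and Theorem \ref{mainthm2} is a corollary.
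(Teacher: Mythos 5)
Your proposal is correct in substance, but it is organized quite differently from the paper, which simply announces that Theorems \ref{mainthm1} and \ref{mainthm2} have \emph{identical} proofs and then writes the entire argument of \S 3 with $\Gamma$ standing for either a countable discrete group or a separable involutive Banach algebra. No reduction to a countable dense subset is needed there: the complexity computation for $S_0$ only quantifies over $\varepsilon\in\Q_+$ and over $\vec\xi$ in a countable dense subset of $H^{<\N}$, while the intertwining condition $(\forall\gamma\in\Gamma)\,T\sigma_\gamma=\pi_\gamma T$ is weakly closed regardless of the cardinality of $\Gamma$ (Lemma \ref{l.relation_R}), and the direct integral decomposition into primary representations is taken from Nielsen's book, which develops it precisely for separable involutive Banach algebras. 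So the paper pays nothing for the extra generality, whereas your route buys a cleanly isolated statement about countable operator families at the cost of the bookkeeping you describe.

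One point in your formulation needs repair, and it is exactly where you hedge at the end. The ``key lemma'' as you state it --- for an arbitrary countable family $(a_i)$ with arbitrary norm bounds $M(i)$ --- is not what the paper's argument delivers, and boundedness is not the property that substitutes for unitarity. What the proof actually needs is that the family be closed under adjoints: this is what makes $T\mapsto T^*$ carry $R_{\sigma,\pi}$ to $R_{\pi,\sigma}$, hence makes $R_\sigma$ a von Neumann algebra, and thereby licenses everything downstream (polar decomposition staying inside $R_{\sigma,\pi}$ in Lemma \ref{l.polardecomp}, Murray--von Neumann comparison of projections, traces on finite factors, and the direct integral decomposition of intertwiners in Lemma \ref{l.decomp}). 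For a non-self-adjoint family the commutant need not be a von Neumann algebra and the argument collapses. In your setup this is harmless because you chose $\Gamma_0$ closed under the involution (and for a genuinely arbitrary family one can always pass to $(a_i)\cup(a_i^*)$ without changing the unitary conjugacy relation, since $UaU^{-1}=b$ iff $Ua^*U^{-1}=b^*$), but the lemma should be stated for self-adjoint families, not merely bounded ones. With that correction, your deferred check succeeds, and indeed the paper has already performed it by writing the proof for involutive Banach algebras from the outset.
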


Theorems \ref{mainthm1} and \ref{mainthm2} have identical proofs, and below we will use $\Gamma$ to stand for either a countable discrete group or a separable involutive Banach algebra.

The paper is organized as follows: First, in \S 2, we will recall various basic notions from the theory of von Neumann algebras and the direct integral theory of unitary representations, and establish a simple lemma that will facilitate hierarchy complexity calculations later. The proof of Theorems \ref{mainthm1} and \ref{mainthm2} is given in \S 3, where we also show that the result is optimal, and discuss some consequences related to Borel reducibility of equivalence relations.

\medskip

We would like to thank Roman Sasyk for his careful reading of an earlier version for this paper, and for his useful comments and suggestions. We would also like to thank Simon Thomas for pointing out Corollary \ref{c.simon} below.

\section{Preliminaries}

In what follows, $H$ will always denote an infinite dimensional separable complex Hilbert space equipped with the topology induced by its norm. We let $H^1=\{\xi\in H:\|\xi\|\leq 1\}$ be the unit ball of $H$ and $\mathcal B^1(H)=\{T\in\mathcal B(H):\|T\|\leq 1\}$, where $\|T\|$ is the operator norm of $T$.


\subsection{Elementary relations and functions} The following Lemma collects the basic complexity calculations that we need for our arguments.


\begin{lemma}\label{l.transfer}
(a) The function 
$$
\mathcal B(H)\times H\times H\to\C:(T,\xi,\eta)\mapsto\langle T\xi,\eta\rangle
$$
is continuous when $\mathcal B(H)$ has the weak topology.

(b) The function $\mathcal B(H)\times H\to H: (T,\xi)\to T\xi$ is continuous when $\mathcal B(H)$ is given the strong topology (but not the weak.)

(c) The relations $Q^{<},Q^{\leq}\subseteq H\times H\times\C\times\R_+\times\mathcal B(H)$ defined by
$$
Q^{<}(\xi,\eta,z,\varepsilon,T)\iff |\langle T\xi,T\eta\rangle -z|< \varepsilon
$$
$$
Q^{\leq}(\xi,\eta,z,\varepsilon,T)\iff |\langle T\xi,T\eta\rangle -z|\leq \varepsilon
$$
are $F_\sigma$ when $\mathcal B(H)$ is given the weak topology.

(d) The basic strongly closed sets $\{T\in\mathcal B(H): \|T\xi-\eta\|\leq\varepsilon\}$ are also weakly closed.

(e) The basic strongly open sets $\{T\in\mathcal B(H): \|T\xi-\eta\|<\varepsilon\}$ are weakly $F_\sigma$, and so the strongly open sets are weakly $F_\sigma$.

(f) The map $H\times H\times\mathcal B(H)\times \mathcal B(H)\to\C: (\xi,\eta,T,S)\mapsto \langle T\xi,S\eta\rangle$ is continuous when the first $\mathcal B(H)$ has the weak topology and the second has the strong topology (or when the opposite is the case.)
\end{lemma}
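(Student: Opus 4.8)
The plan is to establish the six items by reducing everything to the basic fact that weak convergence $T_n \to T$ in $\mathcal B(H)$ means precisely $\langle T_n\xi,\eta\rangle \to \langle T\xi,\eta\rangle$ for all $\xi,\eta\in H$, while strong convergence means $\|T_n\xi - T\xi\| \to 0$ for all $\xi$, together with the uniform boundedness principle (any weakly or strongly convergent net in $\mathcal B(H)$ is bounded, and on a bounded subset of $\mathcal B(H)$ the weak and strong topologies are determined by a countable dense set of vectors). For (a), continuity is literally the definition of the weak topology: the map $(T,\xi,\eta)\mapsto\langle T\xi,\eta\rangle$ is separately continuous in $T$, and jointly continuous in $(\xi,\eta)$ for fixed $T$ with a bound depending on $\|T\|$, so a standard $3\varepsilon$ argument (choosing a neighborhood of $T$ on which $\|T\|$ is controlled — note $T\mapsto\|T\|$ is weakly lower semicontinuous, so this needs the minor care of working inside a weakly closed ball) gives joint continuity. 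For (b), the estimate $\|T\xi - T_0\xi_0\| \le \|(T-T_0)\xi_0\| + \|T\|\,\|\xi-\xi_0\|$ gives strong joint continuity once $\|T\|$ is controlled near $T_0$; the parenthetical remark that this fails for the weak topology is witnessed by any orthonormal sequence $e_n \to 0$ weakly with $\|e_n\| = 1$, so I would just cite such an example.

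For (c), the key move is to write $\langle T\xi, T\eta\rangle = \langle T^*T\xi, \eta\rangle$ and expand $T^*T\xi$ in a fixed orthonormal basis $(e_k)_{k\in\N}$ of $H$: we have $\langle T\xi,T\eta\rangle = \sum_k \langle T\xi, e_k\rangle\overline{\langle T\eta, e_k\rangle}$, and each summand $\langle T\xi, e_k\rangle\overline{\langle T\eta, e_k\rangle}$ is weakly continuous in $T$ by (a). The partial sums $s_N(T) = \sum_{k<N}\langle T\xi,e_k\rangle\overline{\langle T\eta,e_k\rangle}$ are therefore weakly continuous, and they converge to $\langle T\xi,T\eta\rangle$ pointwise in $T$, but not uniformly; however the tail is controlled by $\|T\|$ via Cauchy–Schwarz, namely $|\langle T\xi,T\eta\rangle - s_N(T)| \le (\sum_{k\ge N}|\langle T\xi,e_k\rangle|^2)^{1/2}(\sum_{k\ge N}|\langle T\eta,e_k\rangle|^2)^{1/2} \le \|T\|^2 \cdot (\text{tail of }\xi)^{1/2}(\text{tail of }\eta)^{1/2}$... wait, that is not quite right since $T$ need not have norm controlling $\|T\xi\|$ in terms of tails of $\xi$. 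The correct tail bound is $|\langle T\xi,T\eta\rangle - s_N(T)| = |\sum_{k\ge N}\langle T\xi,e_k\rangle\overline{\langle T\eta,e_k\rangle}| \le \|P_N^\perp T\xi\|\,\|P_N^\perp T\eta\| \le \|T\xi\|\,\|T\eta\|$, which does not go to zero uniformly. So instead I would write
$$
Q^{\le}(\xi,\eta,z,\varepsilon,T) \iff \forall m\in\N\ \exists N\in\N\ \bigl(\ \exists r\in\Q_+\ (s_N(T)\text{ computed as a limit of rationals})\ \bigr)\ |s_N(T) - z| \le \varepsilon + \tfrac1m,
$$
but this still has the tail problem. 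The clean fix: $Q^{<}(\xi,\eta,z,\varepsilon,T) \iff \exists m\in\N\ \exists N\in\N\ \forall T'\text{ in a basic weak nbhd}\dots$ — no. Let me reorganize. The honest approach is: decompose $H = H_0 \oplus H_0^\perp$ along a growing sequence of finite-dimensional subspaces spanned by basis vectors near $\xi$ and $\eta$ is not enough either because $T$ moves them.

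So the genuine plan for (c): observe $Q^{\le}$ holds iff for every $m$ there is $N$ such that $|s_N(T) - z| \le \varepsilon + \frac1m$ AND the tail is $\le \frac1m$; the tail being $\le\frac1m$ is the condition $\sum_{k\ge N}|\langle T\xi,e_k\rangle|^2 \le \delta$ for suitable $\delta = \delta(m,\|\text{something}\|)$, i.e. $\|T\xi\|^2 - \sum_{k<N}|\langle T\xi,e_k\rangle|^2 \le \delta$, but $\|T\xi\|^2$ is itself only weakly lower semicontinuous. The resolution is that $\{T : \|T\xi\|\le R\}$ is weakly closed (it is $\{T : \forall\eta\in H^1\ |\langle T\xi,\eta\rangle|\le R\}$, an intersection of weakly closed sets, cf. item (d)), so one writes
$$
Q^{\le}(\xi,\eta,z,\varepsilon,T) \iff \exists R\in\N\ \Bigl(\|T\xi\|\le R\ \wedge\ \|T\eta\|\le R\ \wedge\ \forall m\ \exists N\ \bigl(\ |s_N(T)-z|\le\varepsilon+\tfrac1m\ \wedge\ R^2 - {\textstyle\sum_{k<N}}|\langle T\xi,e_k\rangle|^2 \le \tfrac1{m}\ \Bigr)\Bigr)\ \text{won't give }F_\sigma\text{ directly}.
$$
Since the inner $\forall m\exists N$ with everything weakly continuous/closed inside is weakly $G_\delta$, not $F_\sigma$, I would instead observe that on each weakly closed ball $\{\|T\|\le R\}$ the weak topology is Polish, and $Q^{\le}$ restricted there is weakly closed (the function $T\mapsto\langle T\xi,T\eta\rangle$ is the uniform-on-that-ball limit of the weakly continuous $s_N$, hence weakly continuous on the ball), so $Q^{\le}$ is a countable union over $R\in\N$ of weakly closed sets, i.e. $F_\sigma$; and $Q^{<}$ is handled the same way, writing strict inequality as a countable union of closed inequalities $\le \varepsilon - \frac1m$ intersected with the balls. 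This is the crux of the lemma and the main obstacle — getting the quantifier structure to land on $F_\sigma$ rather than $F_{\sigma\delta}$, which works precisely because of the uniform convergence of $s_N$ on norm-bounded sets.

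Items (d), (e), (f) are then quick. For (d), $\{T : \|T\xi - \eta\|\le\varepsilon\} = \bigcap_{\zeta\in H^1}\{T : \re\langle T\xi-\eta,\zeta\rangle \le \varepsilon\}$, an intersection of weakly closed half-spaces by (a), hence weakly closed. For (e), $\{T : \|T\xi-\eta\| < \varepsilon\} = \bigcup_{m\in\N}\{T : \|T\xi-\eta\|\le\varepsilon - \frac1m\}$ is a countable union of weakly closed sets by (d), so weakly $F_\sigma$; and every strongly open set is a countable union of basic strongly open sets (the strong topology is second countable, using a countable dense set of vectors and rational radii), hence weakly $F_\sigma$. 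For (f), the estimate $|\langle T\xi, S\eta\rangle - \langle T_0\xi_0, S_0\eta_0\rangle| \le |\langle (T-T_0)\xi_0, S_0\eta_0\rangle| + |\langle T\xi, S\eta\rangle - \langle T\xi_0, S_0\eta_0\rangle|$, with the first term weakly small in $T$ (item (a)) and the second controlled by strong continuity of $S\mapsto S\eta$ (item (b)) together with continuity of the inner product and local bounds on $\|T\|, \|S\|$, gives joint continuity; the symmetric case is identical by conjugating. Throughout I would suppress the routine $\varepsilon$-bookkeeping and the standard fact that weakly/strongly bounded sets have the two topologies metrized by a fixed countable dense sequence of vectors.
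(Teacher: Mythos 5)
Your treatment of (a), (b), (d), (e) and (f) matches the paper's, which leaves (a) and (b) to the reader, derives (d) from $\|v\|=\sup_{\zeta\in H^1}|\langle v,\zeta\rangle|$, and gets (e) and (f) from the earlier parts. The problem is (c), which you correctly single out as the crux: your final argument rests on the claim that the partial sums $s_N(T)=\sum_{k<N}\langle T\xi,e_k\rangle\overline{\langle T\eta,e_k\rangle}$ converge to $\langle T\xi,T\eta\rangle$ uniformly on each ball $\{\|T\|\le R\}$, so that $T\mapsto\langle T\xi,T\eta\rangle$ is weakly continuous there and $Q^{\le}$ meets each ball in a weakly closed set. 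That claim is false. Take $\xi=\eta=e_1$ and let $T_N$ be the rank-one operator $T_N\zeta=\langle\zeta,e_1\rangle e_N$; then $\|T_N\|=1$ and $T_N\to 0$ weakly, yet $\langle T_Ne_1,T_Ne_1\rangle=1$ for every $N$ while $s_N(T_N)=0$. So the convergence is not uniform on the unit ball, $T\mapsto\|Te_1\|^2$ is only weakly \emph{lower semicontinuous} there (this is exactly why the weak and strong topologies differ on the unit ball and why $T\mapsto T^*T$ is not weakly continuous), and the set $\{T:\|T\|\le 1,\ |\langle Te_1,Te_1\rangle-1|\le 1/2\}$ is not weakly closed. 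Your earlier worry about the tails was the right instinct; the ``clean fix'' you settle on does not exist in this form.

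The repair requires no uniformity at all, only pointwise convergence of the series (Parseval) together with weak continuity of each individual $s_k$. The paper writes
$$
|\langle T\xi,T\eta\rangle-z|<\varepsilon\iff(\exists N)(\exists q\in\Q_+)(\forall k\ge N)\ |s_k(T)-z|\le\varepsilon-q,
$$
whose right-hand side is a countable union over $(N,q)$ of countable intersections of weakly closed sets, hence $F_\sigma$; both directions of the equivalence use only that $s_k(T)\to\langle T\xi,T\eta\rangle$ for each fixed $T$. This is the standard fact that a pointwise limit of continuous functions is $\mathbf{\Sigma}^0_2$-measurable --- precisely the quantifier manipulation you were circling around before abandoning it for the uniform-convergence route. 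A smaller remark: in (a) you propose to ``choose a neighborhood of $T$ on which $\|T\|$ is controlled,'' but no weak neighborhood of any operator has bounded norm, so joint continuity on all of $\mathcal B(H)$ in fact fails; only separate continuity, or joint continuity on norm-bounded sets, holds, and that is all that is ever used. The paper's own statement is equally loose on this point, so it should not be held against you, but your proposed fix as written does not go through.
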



\begin{proof}
Verification of (a) and (b) are routine and left to the reader.

(c) Fix an orthonormal basis $(e_i)$ for $H$. Then
$$
|\langle T\xi,T\eta\rangle-z|<\varepsilon \iff (\exists N)(\exists q\in\Q_+)(\forall k\geq N)|\sum_{l=1}^k\langle T\xi,e_l\rangle\langle T\eta, e_l\rangle^*-z|\leq\varepsilon-q.
$$
Since the coefficients in the sum are weakly continuous, this gives a $F_\sigma$ definition of $Q^<$. That $Q^{\leq}$ is $F_\sigma$ now also follows.

(d) Since $\|\xi\|=\sup\{|\langle \xi,\eta\rangle|: \|\eta\|\leq 1\}$ we have
$$
\|T\xi-\eta\|\leq\varepsilon\iff (\forall\zeta\in H^1)|\langle T\xi-\eta,\zeta\rangle|\leq\varepsilon,
$$
which gives a weakly closed definition of the basic strongly closed ball.

(e) follows directly from (d), and (f) follows from (a) and (b).
\end{proof}


\begin{definition}
The relation $R\subseteq\rep(\Gamma,H)\times\rep(\Gamma,H)\times\mathcal B(H)$ is defined by
$$
R(\sigma,\pi,T)\iff (\forall \gamma\in\Gamma) T\sigma_\gamma=\pi_\gamma T.
$$
The set $R_{\sigma,\pi}=\{T\in\mathcal B(H):R(\sigma,\pi,T)\}$ is the set of \emph{intertwiners} of $\sigma$ and $\pi$. We let $R_\sigma=R_{\sigma,\sigma}$, and
$$
R^1=R\cap \rep(\Gamma,H)\times\rep(\Gamma,H)\times\mathcal B^1(H).
$$
Finally, let $R^{\rm u}_{\sigma,\pi}$ denote the set of partial isometries in $R^1_{\sigma,\pi}$, i.e. $u\in R^1_{\sigma,\pi}$ such that $u^*u$ and $uu^*$ are (orthogonal) projections.

\end{definition}


\begin{lemma}\label{l.relation_R}
The relations $R$ and $R^1$ are closed when $\mathcal B(H)$ has the weak topology. Moreover, if $T\in R_{\sigma,\pi}$ then $T^*\in R_{\pi,\sigma}$. In particular, $R_\sigma$ is a von Neumann algebra acting on $H$.
\end{lemma}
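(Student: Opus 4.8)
The plan is to prove the three assertions in order, with the first (weak closedness of $R$) doing essentially all of the work and the remaining two following formally. To see that $R$ is weakly closed I would rewrite the defining condition in terms of inner products: for $T\in\mathcal B(H)$ one has $T\sigma_\gamma=\pi_\gamma T$ if and only if $\langle T\sigma_\gamma\xi,\eta\rangle=\langle T\xi,\pi_\gamma^*\eta\rangle$ for all $\xi,\eta\in H$. The point of this particular rearrangement is that $T$ now occurs only inside expressions of the form $\langle T\cdot,\cdot\rangle$, which by Lemma \ref{l.transfer}(a) are handled continuously in the weak topology — whereas the naive map $T\mapsto T\xi$ appearing in $T\sigma_\gamma=\pi_\gamma T$ is only strongly continuous. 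Since $\pi_\gamma^*$ equals $\pi_{\gamma^{-1}}$ in the group case and $\pi_{\gamma^*}$ in the Banach-algebra case, the maps $\sigma\mapsto\sigma_\gamma\xi$ and $\pi\mapsto\pi_\gamma^*\eta$ are continuous from $\rep(\Gamma,H)$ into $H$ by the very definition of the topology on $\rep(\Gamma,H)$; composing with Lemma \ref{l.transfer}(a) shows that both $(\sigma,\pi,T)\mapsto\langle T\sigma_\gamma\xi,\eta\rangle$ and $(\sigma,\pi,T)\mapsto\langle T\xi,\pi_\gamma^*\eta\rangle$ are continuous when $\mathcal B(H)$ carries the weak topology. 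Hence for each fixed $\gamma,\xi,\eta$ the set $\{(\sigma,\pi,T):\langle T\sigma_\gamma\xi,\eta\rangle=\langle T\xi,\pi_\gamma^*\eta\rangle\}$ is closed, and $R$, being the intersection of these over all $\gamma\in\Gamma$ and $\xi,\eta\in H$, is weakly closed. For $R^1$ I would then note that $\mathcal B^1(H)=\{T:\|T\|\leq1\}=\bigcap\{T:|\langle T\xi,\eta\rangle|\leq1\}$, the intersection over $\xi,\eta$ in the unit ball of $H$, is weakly closed by Lemma \ref{l.transfer}(a); therefore $R^1=R\cap(\rep(\Gamma,H)\times\rep(\Gamma,H)\times\mathcal B^1(H))$ is weakly closed as well.

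The claim that $T\in R_{\sigma,\pi}$ implies $T^*\in R_{\pi,\sigma}$ follows by taking adjoints: from $T\sigma_\gamma=\pi_\gamma T$ for all $\gamma$ we get $\sigma_\gamma^*T^*=T^*\pi_\gamma^*$ for all $\gamma$, and since $\gamma\mapsto\gamma^{-1}$ (respectively $\gamma\mapsto\gamma^*$) is a bijection of $\Gamma$, this is the same as saying $\sigma_\delta T^*=T^*\pi_\delta$ for all $\delta$, i.e. $T^*\in R_{\pi,\sigma}$. Finally, $R_\sigma=R_{\sigma,\sigma}$ is clearly a unital subalgebra of $\mathcal B(H)$, it is weakly (hence also strongly) closed by the first part applied with $\pi=\sigma$, and it is closed under adjoints by the assertion just proved; equivalently, it is the commutant of the self-adjoint family $\{\sigma_\gamma:\gamma\in\Gamma\}$. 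In either formulation it is a von Neumann algebra.

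I do not expect a serious obstacle in any of this; the only real subtlety is the one flagged above, namely that one must keep $T$ out of positions such as $T\xi$ where weak continuity fails, which is precisely why the identity $\langle\pi_\gamma T\xi,\eta\rangle=\langle T\xi,\pi_\gamma^*\eta\rangle$ is invoked rather than working with the relation $T\sigma_\gamma=\pi_\gamma T$ in its original form.
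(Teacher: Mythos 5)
Your proposal is correct and follows essentially the same route as the paper: both reduce the intertwining identity $T\sigma_\gamma=\pi_\gamma T$ to a family of scalar equations $\langle T\sigma_\gamma\xi,\eta\rangle=\langle T\xi,\pi_\gamma^{*}\eta\rangle$ in which $T$ occurs only in weakly continuous positions (the paper writes the left side as $\langle\sigma_\gamma\xi,T^{*}\eta\rangle$ and cites Lemma \ref{l.transfer}(f) together with weak continuity of $T\mapsto T^{*}$, where you use part (a) directly), and both obtain $T^{*}\in R_{\pi,\sigma}$ and the von Neumann algebra claim as formal consequences. Your extra remark that $\mathcal B^1(H)$ is weakly closed is a harmless addition the paper leaves implicit.
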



\begin{proof}
We claim that
\begin{equation}\label{eq.R}
R(\sigma,\pi,T)\iff (\forall\gamma\in\Gamma)(\forall\varepsilon>0)(\forall \xi,\eta\in H) |\langle\sigma_\gamma(\xi),T^*\eta\rangle-\langle T\xi,\pi_{\gamma^{-1}}(\eta)\rangle|\leq\varepsilon.
\end{equation}
Note that this equivalence gives a weakly closed definition of $R$ by (f) of Lemma \ref{l.transfer} and the fact that $T\mapsto T^*$ is weakly continuous. To see the equivalence, just note that it follows from the right hand side of \eqref{eq.R} that
$$
(\forall\varepsilon>0)(\forall \xi,\eta\in H) |\langle T\sigma_g(\xi),\eta\rangle-\langle \pi_g T\xi,\eta\rangle|\leq\varepsilon,
$$
so $\|T\sigma_g\xi-\pi_g T\xi\|=0$ for all $\xi\in H$, and so $T\sigma_g=\pi_g T$. Finally, \eqref{eq.R} gives $R(\sigma,\pi,T)\iff R(\pi,\sigma,T^*)$.
\end{proof}


\subsection{Some von Neumann algebra notions} In this section we recall the basic notions from the theory of von Neumann algebras that are needed for the proof of Theorems \ref{mainthm1} and \ref{mainthm2}.

The set of (orthogonal) projections in $\mathcal B(H)$ will be denoted $P(H)$, and for $M$ a von Neumann algebra (e.g., $M=R_\sigma$), we let $P(M)$ be the set of projections in $M$. Also, $M^+$ denotes the set of positive operators in the von Neumann algebra $M$, and, as usual, for $x,y\in M$ we write $x\leq y$ when $y-x\in M^+$.

If $\sigma\in\Rep(\Gamma,H)$ and $p\in P(R_\sigma)$, then the range of $p$ is a $\sigma$-invariant closed subspace, and we denote by $\sigma|p$ the restriction of $\sigma$ to $\ran(p)$. If $\sigma,\pi\in\Rep(\Gamma,H)$ and $p\in P(R_\sigma)$, $q\in P(R_\pi)$, then we will say that $p$ is \emph{equivalent} to $q$, written $p\sim q$, if $\sigma|p\simeq \pi|q$. This is clearly the same as saying that there is a partial isometry $u\in R_{\sigma,\pi}$ such that $u^*u=p$ and $uu^*=q$. In particular, $p,q\in P(R_\sigma)$ are equivalent precisely when they are Murray-von Neumann equivalent in $R_\sigma$, see \cite{blackadar06}. Also, for $p\in R_\sigma$ and $q\in R_\pi$, write $p\precsim q$ if $p$ is equivalent to a sub-projection of $q$. Further, we  will write $\sigma\precsim\pi$ if $1\in R_\sigma$ is equivalent to a projection in $R_\pi$, and so in particular, we have $\sigma|p\precsim \pi|q$ if and only if $p\precsim q$. Throughout the paper we will use the basic (Murray-von Neumann) comparison theory of projections freely, and refer to \cite{blackadar06} for background.

When $\Gamma$ is group, we will say that the representations $\sigma$ and $\pi$ are \emph{disjoint}, written $\sigma\perp\pi$, if $R_{\sigma,\pi}=\{0\}$. When $\Gamma$ is an involutive Banach algebra, we define $\sigma\perp\pi$ to hold if either $\sigma$ or $\pi$ is a trivial representation (i.e., $\sigma_a=0$ for all $a\in\Gamma$), or $R_{\sigma,\pi}=\{0\}$.\footnote{Our definition of $\perp$ is made so that Theorem \ref{t.directintdecomp} can be stated without making the additional assumption that the representations are non-degenerate.} Effros has shown that $\perp$ is a Borel relation, see \cite{effros65}.

\begin{definition}
A representation $\sigma\in\rep(\Gamma,H)$ is called a \emph{factor representation} (or a \emph{primary representation}), if $R_\sigma$ is a factor, i.e., if the centre of $R_\sigma$ consists of multiples of $1$.
\end{definition}


\begin{remark}\label{rem.transitivity}
It is useful to note that for primary representations, the relation $\not\perp$ is transitive: If $\sigma,\pi,\rho\in \Rep(\Gamma,H)$ are primary and $\sigma\not\perp\pi$ and $\pi\not\perp\rho$, then $\sigma\not\perp\rho$. This follows since in a factor $M$, it holds for all $p,q\in P(M)$ that either $p\precsim q$ or $q\precsim p$. Since $\not\perp$ is also reflexive and symmetric, it is an equivalence relation on the set of factor representations. 
\end{remark}

The following is a well-known consequence of the polar decomposition theorem (see e.g. \cite[1.12.1]{sakai71}):


\begin{lemma}\label{l.polardecomp}
Let $\sigma,\pi\in\rep(\Gamma,H)$ and suppose $T\in R_{\sigma,\pi}$. Let $T=u|T|$ be the polar decomposition. Then $|T|\in R_\sigma$ and $u\in R_{\sigma,\pi}$. In particular, if $p_0\in P(R_\sigma)$, $q_0\in P(R_\pi)$ and $\sigma|p_0\not\perp\pi|q_0$ then there is a subprojection of $p_0$ which is equivalent to a subprojection of $q_0$.
\end{lemma}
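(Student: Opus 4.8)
The plan is to derive everything from the polar decomposition $T = u|T|$, where $|T| = (T^*T)^{1/2}$. First I would observe that $T^*T \in R_\sigma$: indeed, by Lemma \ref{l.relation_R} we have $T \in R_{\sigma,\pi}$ implies $T^* \in R_{\pi,\sigma}$, and since the composition of intertwiners is again an intertwiner (if $T\sigma_\gamma = \pi_\gamma T$ and $T^*\pi_\gamma = \sigma_\gamma T^*$, then $T^*T\sigma_\gamma = T^*\pi_\gamma T = \sigma_\gamma T^*T$), we get $T^*T \in R_\sigma$. Since $R_\sigma$ is a von Neumann algebra (again Lemma \ref{l.relation_R}) it is closed under the continuous functional calculus, so $|T| = (T^*T)^{1/2} \in R_\sigma$. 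In particular $|T|$ commutes with every $\sigma_\gamma$.

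Next I would check $u \in R_{\sigma,\pi}$. The partial isometry $u$ is characterized by $u|T| = T$ and $\ker u = \ker|T| = \ker T$ (equivalently, $u$ vanishes on $(\ran|T|)^\perp$ and is determined on $\overline{\ran|T|}$ by $u(|T|\xi) = T\xi$). Now fix $\gamma \in \Gamma$. On vectors of the form $|T|\xi$ we compute $u\sigma_\gamma(|T|\xi) = u(|T|\sigma_\gamma\xi) = T\sigma_\gamma\xi = \pi_\gamma T\xi = \pi_\gamma u(|T|\xi)$, using that $|T| \in R_\sigma$ and $T \in R_{\sigma,\pi}$. Since $\overline{\ran|T|}$ is $\sigma$-invariant (as $|T| \in R_\sigma$), this shows $u\sigma_\gamma = \pi_\gamma u$ holds on $\overline{\ran|T|}$. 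On the complementary subspace $(\ran|T|)^\perp = \ker|T| = \ker T$, which is also $\sigma$-invariant, both sides act as $0$: $u$ vanishes there by definition, and $\pi_\gamma u$ vanishes there as well, while $u\sigma_\gamma$ maps $\ker T$ into $\ker T$ (since $T\sigma_\gamma\xi = \pi_\gamma T\xi = 0$ when $T\xi = 0$) and $u$ kills $\ker T$. Hence $u\sigma_\gamma = \pi_\gamma u$ on all of $H$, i.e. $u \in R_{\sigma,\pi}$.

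For the final assertion: given $p_0 \in P(R_\sigma)$, $q_0 \in P(R_\pi)$ with $\sigma|p_0 \not\perp \pi|q_0$, unravel the definition. The representation $\sigma|p_0$ acts on $\ran(p_0)$ and $\pi|q_0$ on $\ran(q_0)$; that they are not disjoint means there is a nonzero intertwiner between them, which extends to a nonzero operator $T \in R_{\sigma,\pi}$ with $T = q_0 T p_0$ (i.e. $T$ has range in $\ran(q_0)$ and vanishes off $\ran(p_0)$); when $\Gamma$ is a Banach algebra one uses that $\sigma|p_0$ and $\pi|q_0$ are nontrivial, which follows from $R_{\sigma|p_0,\pi|q_0} \neq \{0\}$ forcing nondegeneracy issues aside. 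Applying the polar decomposition part just proved to this $T$ yields a partial isometry $u \in R_{\sigma,\pi}$ with $u = q_0 u p_0$, $u \neq 0$, so that $u^*u \leq p_0$ and $uu^* \leq q_0$ are nonzero projections implementing an equivalence $u^*u \sim uu^*$; these are the desired subprojections of $p_0$ and $q_0$.

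The only genuinely delicate point is the Banach-algebra case in the last sentence, where ``$\sigma|p_0 \not\perp \pi|q_0$'' could a priori be witnessed by triviality of one of the restrictions rather than by a nonzero intertwiner; I would handle this by noting that if, say, $\sigma|p_0$ is trivial then $p_0 \in P(R_\tau)$ for the trivial representation on $\ran(p_0)$ and one can directly produce a rank-one partial isometry into any nonzero invariant subspace on the $\pi$ side — but in fact the cleanest route is to restrict attention to the non-degenerate case throughout, which is why the footnote to the definition of $\perp$ was inserted; alternatively one observes a trivial subrepresentation always has a one-dimensional invariant subspace on each side and these are trivially equivalent. Everything else is a routine manipulation of the functional calculus inside the von Neumann algebra $R_\sigma$ and does not require further comment.
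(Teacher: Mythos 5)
Your proof is correct and follows essentially the same route as the paper's: $T^*T\in R_\sigma$ by composing intertwiners, $|T|\in R_\sigma$ by functional calculus in the von Neumann algebra $R_\sigma$, and $u\in R_{\sigma,\pi}$ by verifying the intertwining identity on $\ran(|T|)$ and extending by density (the paper leaves the ``in particular'' clause implicit, which you correctly spell out via a nonzero $T=q_0Tp_0$ and the comparison $u^*u\leq p_0$, $uu^*\leq q_0$). One small point: your closing caveat rests on a misreading of the definition --- triviality of a restriction is a witness for $\perp$, not for $\not\perp$, so $\sigma|p_0\not\perp\pi|q_0$ always guarantees a nonzero intertwiner and that final paragraph is unnecessary.
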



\begin{proof}
Let $T\in R_{\sigma,\pi}$. Then from Lemma \ref{l.relation_R} we have $TT^*\in R_\sigma$, and since $R_\sigma$ is von Neumann algebras we have $|T|=(T^*T)^{\frac 1 2}\in R_\sigma$. Moreover, $p=u^*u\in P(R_\sigma)$, since it is the projection onto $\ker(|T|)^\perp$. Since we have
$$
u\sigma_\gamma(|T|\xi)=T\sigma_\gamma(\xi)=\pi_\gamma T(\xi)=\pi_\gamma(u|T|\xi),
$$
and since $\ran(|T|)$ is dense in $\ran (p)$ (see \cite[1.12.1]{sakai71}), it follows that 
$$
u\sigma_\gamma(\xi)=up\sigma_\gamma(\xi)=u\sigma_\gamma(p\xi)=\pi_\gamma u(p\xi)=\pi_\gamma u(\xi)
$$ 
for all $\gamma\in\Gamma$ and $\xi\in H$. Thus $u\in R_{\sigma,\pi}$, as required.
\end{proof}


A primary representation $\sigma$ will be called \emph{type} I, II, or III according to if $R_\sigma$ is a factor of type I, II or III, see \cite{blackadar06}. Following further standard terminology, we will say that a primary representation is \emph{finite} if the identity is a finite projection, \emph{semifinite} if it is type I or II, and \emph{purely infinite} if it is type III. We shall also need the following fundamental fact (see e.g. \cite[III.2.5.8]{blackadar06}):


\begin{theorem}
Every finite von Neumann factor $M$ acting on a separable Hilbert space admits a faithful ultraweakly continuous trace $\tau:M\to \C$ such that $\tau(1)=1$.
\end{theorem}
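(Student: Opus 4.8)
This is the classical Murray--von Neumann result, and a complete proof may be found in \cite[III.2.5.8]{blackadar06}; we indicate the argument. The plan is to first build a \emph{dimension function} on the projections of $M$ and then extend it to a trace.

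First I would record the two features of a factor $M$ that drive everything: (i) for any $p,q\in P(M)$ one has $p\precsim q$ or $q\precsim p$; and (ii) since $M$ is finite, $p\precsim q$ together with $q\precsim p$ forces $p\sim q$, so no nonzero projection is equivalent to a proper subprojection of itself. I would also use the \emph{halving lemma}: in a finite factor every nonzero $p\in P(M)$ can be written $p=p_1+p_2$ with $p_1,p_2$ orthogonal and $p_1\sim p_2$. Iterating this starting from $1$ produces, for each $n$, an orthogonal family $e^{(n)}_1,\dots,e^{(n)}_{2^n}\in P(M)$ of mutually equivalent projections summing to $1$, and by transporting the halving along equivalences one may take these families to refine one another as $n$ grows.

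Next I would define $D\colon P(M)\to[0,1]$ by
$$
D(p)=\inf\Bigl\{\tfrac{k}{2^n}:\ p\precsim e^{(n)}_1+\cdots+e^{(n)}_k\Bigr\},
$$
and check, using (i) and (ii), that $D$ does not depend on the chosen halvings, that $D(1)=1$, that $D$ is monotone for $\precsim$, that $D(p)=D(q)$ iff $p\sim q$, and that $D(p+q)=D(p)+D(q)$ for orthogonal $p,q$. The key point is that $D(p)=0$ forces $p=0$, since a nonzero projection in a finite factor dominates (a copy of) some $e^{(n)}_1$ with $D(e^{(n)}_1)=2^{-n}>0$. I would then define $\tau$ on finite linear combinations of projections by $\tau(\sum_i\lambda_i p_i)=\sum_i\lambda_i D(p_i)$, verify this is well defined with $|\tau(x)|\le\|x\|$, and extend by norm-continuity to all of $M$ (the projections generate a norm-dense $*$-subalgebra since $M$ is finite). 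The identity $\tau(xy)=\tau(yx)$ reduces, by linearity and continuity, to $\tau(upu^{*})=D(upu^{*})=D(p)=\tau(p)$ for $u$ unitary and $p$ a projection, hence to $\tau(ux)=\tau(xu)$ for all unitaries $u$, and then to the general case since every element of $M$ is a linear combination of unitaries. Faithfulness follows from $D(p)=0\Rightarrow p=0$: if $\tau(x^{*}x)=0$, then for each $\varepsilon>0$ the spectral projection $s_\varepsilon=\chi_{[\varepsilon,\infty)}(x^{*}x)$ satisfies $\varepsilon s_\varepsilon\le x^{*}x$, so $\varepsilon\tau(s_\varepsilon)\le\tau(x^{*}x)=0$, hence $s_\varepsilon=0$; letting $\varepsilon\downarrow 0$ gives $x^{*}x=0$.

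For ultraweak continuity I would show $\tau$ is completely additive on projections: an orthogonal family of nonzero projections in $M$ is at most countable (as $H$ is separable), and the corresponding values of $D$ sum to at most $D(1)=1$; additivity of $D$ then yields complete additivity. A positive linear functional on a von Neumann algebra that is completely additive on projections is normal, and a normal bounded functional is ultraweakly continuous, which finishes the proof. The only genuinely delicate part of this plan is the construction and well-definedness of the dimension function $D$ — once that is in hand, the passage to a faithful normal trace is essentially formal.
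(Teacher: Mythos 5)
The paper does not actually prove this statement: it is quoted as a standard fundamental fact with only a pointer to \cite[III.2.5.8]{blackadar06}, so there is no in-paper proof to compare against. Your sketch is the classical Murray--von Neumann construction of the trace via a dimension function on $P(M)$, and as an outline of that argument it is essentially correct, with two caveats. First, the halving lemma as you state it --- every nonzero projection in a finite factor splits as a sum of two equivalent orthogonal projections --- is false for finite type $\I$ factors: in $M_3(\C)$ the identity (or any odd-rank projection) cannot be halved. You should dispose of the type $\I_n$ case separately (there the normalized matrix trace does everything trivially) and run the halving construction only in the type $\II_1$ case, where it is valid because there are no minimal projections. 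Second, you locate the ``only genuinely delicate part'' in the construction of $D$, but the technically hardest step is the one you pass over with ``verify this is well defined'': showing that $\sum_i\lambda_i D(p_i)$ depends only on the operator $\sum_i\lambda_i p_i$, i.e., that $D$ extends to a \emph{linear} functional. That additivity problem is the heart of Murray and von Neumann's original work, and modern treatments often circumvent it entirely (for instance via the Ryll-Nardzewski fixed-point theorem applied to the closed convex hull of a unitary conjugation orbit, or via Yeadon's argument); as written, your sketch asserts rather than proves it. A minor point: norm-density of the span of projections holds in every von Neumann algebra by the spectral theorem, so finiteness plays no role there. With the type $\I$ case split off and the linearity step either carried out or explicitly referenced, your outline is a faithful account of the standard proof of the cited fact.
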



\subsection{Direct integral theory} We now review the direct integral theory of unitary representations. The main reference for this is \cite{nielsen80}.


\begin{definition}
Let $\sigma\in\rep(\Gamma,H)$. By a \emph{direct integral decomposition of $\sigma$ into primary representations} we mean a pair $((X,\mu),\sigma_x\actson H_x)$, where $(X,\mu)$ is a standard measure space and $\sigma_x\in\Rep(\Gamma, H_x)$ is a measurable assignment of representations $\sigma_x$ acting on Hilbert spaces $H_x$, and the following holds:

(a) $\sigma_x$ is a primary representation for all $x\in X$;

(b) $\sigma$ is isomorphic to 
$$
\int\sigma_xd\mu(x);
$$

(c) $\mu(\{x: (\forall y\in X) y\neq x\implies \sigma_x\perp\sigma_y\})=1$.
\end{definition}

\noindent Without any real impact, we could of course replace (c) with

\medskip

(c$^\prime$) For all $x,y\in X$, if $x\neq y$ then $\sigma_x\perp\sigma_y$.

\medskip

Using the direct integral decomposition of $R_\sigma$ into factors, one obtains (see \cite[III.5.1.14]{blackadar06} and \cite[Ch. 3]{nielsen80}):


\begin{theorem}\label{t.directintdecomp}
Every $\sigma\in\Rep(\Gamma, H)$ admits a direct integral decomposition $((X,\mu), \sigma_x\actson H_x)$ into primary representations. Moreover, this decomposition is essentially unique in the following sense: If $((Y,\nu),\sigma_y\actson H_y)$ is any other direct integral decomposition of $\sigma$, then there is a measure-class preserving bijection $\phi:X\to Y$ such that $\sigma_x\simeq \sigma_{\phi(x)}$ for almost all $x\in X$.
\end{theorem}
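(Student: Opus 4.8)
\textbf{Proof plan for Theorem \ref{t.directintdecomp}.}

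The plan is to reduce the statement about representations to the corresponding statement about the abelian von Neumann algebra structure of the center of $R_\sigma$, which is the classical direct integral / central decomposition theory of von Neumann algebras. First I would recall that a direct integral decomposition of $\sigma$ into primary representations is, up to the identification of diagonal operators, the same thing as a decomposition of $R_\sigma$ as a direct integral of factors over a standard measure space $(X,\mu)$, with the diagonalizable operators being exactly $L^\infty(X,\mu) \cong Z(R_\sigma)$, the center of $R_\sigma$. Concretely, choose a maximal abelian von Neumann subalgebra $\mathcal Z \subseteq Z(R_\sigma)$ — here, since we want the fibers $\sigma_x$ to be primary (i.e.\ the fiber algebras $R_{\sigma_x}$ to be factors), we take $\mathcal Z = Z(R_\sigma)$ itself. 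Since $H$ is separable, $Z(R_\sigma)$ is a separable abelian von Neumann algebra, hence isomorphic to $L^\infty(X,\mu)$ for some standard measure space $(X,\mu)$. The general disintegration theorem for von Neumann algebras (see \cite[III.5.1.14]{blackadar06}, \cite[Ch. 3]{nielsen80}) then produces a measurable field $x \mapsto H_x$ of Hilbert spaces, an isomorphism $H \cong \int^{\oplus} H_x \, d\mu(x)$ carrying $Z(R_\sigma)$ onto the diagonalizable operators, and a measurable field of representations $\sigma_x$ with $\sigma \cong \int \sigma_x \, d\mu(x)$, such that $R_\sigma$ decomposes as $\int^\oplus R_{\sigma_x}\, d\mu(x)$. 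Because we disintegrated over the full center, almost every fiber algebra $R_{\sigma_x}$ is a factor, so condition (a) holds.

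The next step is to verify condition (c), namely that $\mu$-almost every $\sigma_x$ is disjoint from $\sigma_y$ for all $y \neq x$. This is exactly where the maximality of the center as the algebra we disintegrate over is used. Suppose, toward a contradiction, that a non-null set of pairs $(x,y)$ with $x \neq y$ had $\sigma_x \not\perp \sigma_y$; by Lemma \ref{l.polardecomp} this would give, on a non-null set, partial isometries intertwining subrepresentations of $\sigma_x$ and $\sigma_y$, which can be assembled (by a measurable selection argument, using that $\not\perp$ and the relevant intertwiner spaces are Borel — Effros \cite{effros65}) into a non-scalar element of $R_\sigma$ that does not commute with all of $Z(R_\sigma)$ while lying in the center, contradicting that $Z(R_\sigma)$ was the full center. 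More precisely, one shows that two primary direct-integral components $\sigma|p$ and $\sigma|q$ over disjoint base sets are never equivalent nor have a common subrepresentation, because an intertwiner would descend to a partial isometry in $R_\sigma$ whose source and range projections lie in $Z(R_\sigma)$ and are equivalent there, forcing the base sets to overlap. This is the content of the ``essential disjointness'' of central components.

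For the uniqueness clause, let $((Y,\nu),\sigma_y \actson H_y)$ be a second direct integral decomposition into primary representations. The key observation is that any such decomposition gives an abelian von Neumann subalgebra $A \subseteq R_\sigma$ (the diagonalizable operators, $\cong L^\infty(Y,\nu)$) which, because the fibers are primary and pairwise disjoint by (c), must coincide with $Z(R_\sigma)$: it is central since the fiber algebras are factors (so the only operators commuting with all $R_{\sigma_y}$ fiberwise are the diagonal ones), and it is all of the center because distinct central projections correspond, via disjointness of the fibers, to genuinely inequivalent subrepresentations, so no central projection can be ``split'' further — i.e., $A$ is maximal among abelian subalgebras that are contained in $Z(R_\sigma)$, which forces $A = Z(R_\sigma)$. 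Thus both decompositions realize the same abelian von Neumann algebra $Z(R_\sigma) \cong L^\infty(X,\mu) \cong L^\infty(Y,\nu)$, and the uniqueness part of the von Neumann disintegration theorem (\cite[III.5.1.14]{blackadar06}) yields a measure-class-preserving Boolean isomorphism, hence a measure-class-preserving bijection $\phi: X \to Y$ (after discarding null sets) with $H_x \cong H_{\phi(x)}$ and $\sigma_x \simeq \sigma_{\phi(x)}$ for $\mu$-almost every $x$.

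The main obstacle I expect is not the algebraic skeleton — that is standard von Neumann algebra disintegration — but the careful handling of the measurability and null-set bookkeeping: ensuring that the fiberwise statements ``$R_{\sigma_x}$ is a factor'' and ``$\sigma_x \perp \sigma_y$ for $y \neq x$'' hold on conull sets simultaneously, and that the identification of the diagonalizable algebra of an \emph{arbitrary} primary decomposition with the center $Z(R_\sigma)$ is genuinely forced by conditions (a) and (c) rather than merely being one possible choice. Fortunately, all of this is already worked out in \cite{nielsen80} (especially Ch. 3) and \cite{blackadar06}, so the proof will essentially consist of citing the von Neumann algebra central decomposition and its uniqueness, checking that conditions (a)--(c) correspond precisely to disintegrating over the center, and invoking the Borelness of $\perp$ from \cite{effros65} to make the selection arguments go through.
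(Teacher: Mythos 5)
Your proposal is correct and takes essentially the same route as the paper, which in fact offers no proof of Theorem \ref{t.directintdecomp} beyond citing the central decomposition theory of von Neumann algebras (\cite[III.5.1.14]{blackadar06}, \cite[Ch.~3]{nielsen80}); your sketch is precisely an unpacking of those references, disintegrating over $Z(R_\sigma)$ and invoking the uniqueness of the central decomposition. One phrase in your middle paragraph is garbled (an element ``lying in the center'' cannot ``not commute with all of $Z(R_\sigma)$''), but the correct and standard justification of condition (c) --- that an intertwiner $T$ between components over orthogonal central projections $p,q$ satisfies $T=qTp=qpT=0$ --- is covered by the sources you cite, so this does not affect the verdict.
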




Not only do representations admit direct integral decompositions, but so do intertwiners. If $\sigma\in\rep(\Gamma,H)$ and $((X,\mu),\sigma_x\actson H_x)$ is a decomposition into primary representations, then any $T\in R_\sigma$ can be written as $T=\int T_xd\mu(x)$ essentially uniquely, where $T_x\in R_{\sigma_x}$. We define the \emph{central support} of $T$ to be
$$
\supp(T)=\{x\in X:T_x\neq 0\}.
$$
This is well-defined modulo a $\mu$-null set. From \cite[Theorem 12.4]{nielsen80} we further have:


\begin{lemma}\label{l.decomp}
Let $\sigma_i\in\rep(\Gamma,H)$, $i=0,1$, and let $((X_i,\mu_i), \sigma_{i,x}\actson H_{i,x})$ be the decomposition of $\sigma_i$ into factor representations. Suppose $u\in R^{\rm u}_{\sigma_0,\sigma_1}$. Then there is a measure class preserving bijection $\phi:\supp(u^*u)\to \supp(uu^*)$ and isometries $u_x:H_{0,x}\to H_{1,\phi(x)}$ such that $u_x\in R^{\rm u}_{\sigma_{0,x},\sigma_{1,\phi(x)}}$ and
$$
u=\int \theta(x)^{\frac 1 2} u_xd\mu_0(x),
$$
i.e., $(u\xi)(\psi(x))=\theta(x)^{\frac 1 2}u_x(\xi(x))$, and where $\theta=\frac{d\mu_1}{d\psi[\mu_0]}$.

\end{lemma}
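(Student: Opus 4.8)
The statement to prove is Lemma~\ref{l.decomp}, which asserts that a partial isometry intertwiner between two representations decomposes along the direct integral into a measure-class-preserving bijection of the supports together with a measurable field of isometric intertwiners.

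\medskip

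\textit{Plan.} The heart of the matter is the uniqueness clause in Theorem~\ref{t.directintdecomp} (and its analogue for von Neumann algebras, \cite[III.5.1.14]{blackadar06}), so the strategy is to manufacture, out of $u$, a new direct integral decomposition of $\sigma_0|u^*u$ that is visibly conjugate to $\sigma_1|uu^*$, and then invoke essential uniqueness. Concretely, first I would work inside the factors: by Lemma~\ref{l.polardecomp} applied fiberwise (or rather by the disintegration of intertwiners from \cite[Theorem 12.4]{nielsen80}), the operator $u$ disintegrates as $u=\int u_x\,d\mu_0(x)$ with $u_x\in R_{\sigma_{0,x},\sigma_{1,x}}$ for $\mu_0$-a.e.\ $x$ — but this naive disintegration lands $u_x$ in the "same-index" intertwiner space, which is typically trivial since $\sigma_{0,x}\perp\sigma_{1,y}$ for $x\ne y$ is \emph{not} assumed between the two decompositions. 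So the real content is reconciling the two index spaces $(X_0,\mu_0)$ and $(X_1,\mu_1)$.

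\medskip

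The cleaner route: regard $w=uu^*\in P(R_{\sigma_1})$ and $p=u^*u\in P(R_{\sigma_0})$. Then $\sigma_0|p$ and $\sigma_1|w$ are conjugate via $u$ (restricted appropriately). Each of $\sigma_0|p$ and $\sigma_1|w$ inherits a direct integral decomposition from that of $\sigma_0$ and $\sigma_1$: namely over $\supp(p)=\supp(u^*u)$ with fibers $(\sigma_{0,x}|p_x)$, and over $\supp(w)=\supp(uu^*)$ with fibers $(\sigma_{1,y}|w_y)$, where $p=\int p_x\,d\mu_0$, $w=\int w_y\,d\mu_1$. Since the $\sigma_{i,x}$ are primary, the subrepresentations $\sigma_{0,x}|p_x$ and $\sigma_{1,y}|w_y$ are again primary (a nonzero sub of a factor is a factor), and the disjointness condition (c) is inherited on the supports. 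Hence both are \emph{bona fide} primary direct integral decompositions of the conjugate representations $\sigma_0|p\simeq\sigma_1|w$. Now apply the uniqueness part of Theorem~\ref{t.directintdecomp}: there is a measure-class-preserving bijection $\phi:\supp(u^*u)\to\supp(uu^*)$ with $\sigma_{0,x}|p_x\simeq\sigma_{1,\phi(x)}|w_{\phi(x)}$ a.e. The witnessing unitaries can be chosen measurably (by a standard selection argument, e.g.\ using that the relation "$V$ conjugates these two fibers" is Borel with non-empty sections, plus a Jankov–von Neumann type uniformization), giving isometries $u_x:H_{0,x}\to H_{1,\phi(x)}$ with $u_x\in R^{\mathrm u}_{\sigma_{0,x},\sigma_{1,\phi(x)}}$. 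Finally, one must check that the particular field $(u_x)$ can be taken to reassemble to $u$ itself up to the Radon–Nikodym factor $\theta=\frac{d\mu_1}{d\phi[\mu_0]}$: here one uses that any two partial isometries in $R^{\mathrm u}_{\sigma_0|p,\,\sigma_1|w}$ with the same source and range projections differ by a unitary in the center of (the relevant corner of) $R_{\sigma_1}$, which disintegrates as a scalar field; absorbing phases and the change-of-measure density into the $u_x$ pins down the formula $u=\int\theta(x)^{1/2}u_x\,d\mu_0(x)$.

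\medskip

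\textit{Main obstacle.} The delicate point is the measurability/selection of the field $(u_x)$ and verifying that it can be chosen \emph{coherently} so as to actually recombine to the given $u$ (not merely to \emph{some} intertwiner implementing $\phi$). This is exactly the content of \cite[Theorem 12.4]{nielsen80}: the disintegration of operators between direct integrals is functorial and compatible with the disintegration of the algebras $R_{\sigma_i}$, so once $\phi$ is produced by uniqueness, the existence of a measurable field $(u_x)$ with $u=\int\theta^{1/2}u_x\,d\mu_0$ is precisely the operator-disintegration statement transported across $\phi$. I would therefore organize the proof as: (i) restrict to corners and check primarity/disjointness are inherited; (ii) invoke Theorem~\ref{t.directintdecomp} for $\phi$; (iii) invoke \cite[Theorem 12.4]{nielsen80} (the disintegration of intertwiners) along the graph of $\phi$ to get the field $(u_x)$ and the $\theta^{1/2}$ normalization; (iv) note $u_x$ is an isometry onto $\ran(w_{\phi(x)})$ a.e.\ and lies in $R^{\mathrm u}_{\sigma_{0,x},\sigma_{1,\phi(x)}}$. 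Step (iii), i.e.\ correctly citing and applying the Nielsen disintegration machinery with the index spaces identified via $\phi$, is where essentially all the technical care is needed; the rest is bookkeeping with projections and factors that the paper has already set up.
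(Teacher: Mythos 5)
The paper offers no proof of this lemma at all: it is stated as an immediate consequence of \cite[Theorem 12.4]{nielsen80} (``From [Theorem 12.4] we further have\dots''), so there is nothing to match your argument against line by line. Your reconstruction is a correct and standard way to derive the statement, and your final organization (i)--(iv) --- pass to the corners $\sigma_0|u^*u\simeq\sigma_1|uu^*$, check that the restricted fields are still primary and mutually disjoint so that essential uniqueness (Theorem \ref{t.directintdecomp}) produces $\phi$, then disintegrate $u$ itself as a decomposable operator once the two index spaces are identified along $\phi$ --- is exactly how one would prove Nielsen's theorem from the uniqueness of the primary decomposition. Two small caveats. First, your claim that two partial isometries in $R^{\mathrm u}_{\sigma_0|p,\sigma_1|w}$ with the same source and range projections differ by a unitary in the \emph{center} of the corner is false in general (they differ by a unitary in the corner $wR_{\sigma_1}w$ itself); but this does not matter, because the right move --- which you in fact make in your ``main obstacle'' paragraph --- is not to select a field $(u_x)$ from the uniqueness theorem and then correct it, but to disintegrate the given $u$ directly, using that $u$ intertwines the two diagonal algebras (for $z\in Z(R_{\sigma_1})$ one checks $u^*zu\in Z(R_{\sigma_0|p})$, which is what makes $u$ decomposable over the graph of $\phi$); once that is done no Jankov--von Neumann selection is needed, the field $(u_x)$ and the density $\theta^{1/2}$ fall out of the decomposition. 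Second, note that the $u_x$ one obtains are partial isometries with source projection $p_x$, not isometries defined on all of $H_{0,x}$; this is a slight infelicity in the paper's own statement rather than in your argument.
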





\section{The proof}

We will now give the proof of Theorem \ref{mainthm1} and \ref{mainthm2}. First we define a certain $F_{\sigma \delta}$ relation $S_0\in\rep(\Gamma,H)^2$, along with the auxiliary relations $S_1$ and $S_2$. The set of finite sequences in $H$ is denoted by $H^{<\N}$.


\begin{definition}
The relation $S_2\subseteq \rep(\Gamma,H)\times\rep(\Gamma, H)\times \Q_+\times H^{<\N}\times\mathcal B_1(H)$ is defined as
$$
S_2(\sigma,\pi,\varepsilon, \vec\xi, T)\iff R^1(\sigma,\pi,T)\wedge (\forall i,j<\lh(\xi)) |\langle\vec\xi_i,\vec\xi_j\rangle-\langle T(\vec\xi_i),T(\vec\xi_j)\rangle|<\varepsilon,
$$
and the relation $S_1\subseteq\rep(\Gamma,H)^2\times\Q_+\times H^{<\N}$ is defined as
$$
S_1(\sigma,\pi,\varepsilon,\vec\xi)\iff (\exists T) S_2(\sigma,\pi,\varepsilon,\vec\xi,T).
$$
Finally, the relation $S_0\subseteq \rep(\Gamma,H)^2$ is defined as
$$
S_0(\sigma,\pi)\iff (\forall\varepsilon\in\Q_+)(\forall \vec\xi) S_1(\sigma,\pi,\varepsilon,\vec\xi)\iff (\forall\varepsilon\in\Q_+)(\forall \vec\xi)(\exists T) S_2(\sigma,\pi,\varepsilon,\vec\xi,T).
$$
\end{definition}


\begin{lemma}\label{l.relations}
Let $\Gamma$ and $H$ be as above, and give $\mathcal B^1(H)$ the \emph{weak} topology. Then

(i) The relation $S_2$ is $F_{\sigma}$ (i.e., $\mathbf{\Sigma}^0_2$.)

(ii) The relation $S_1$ is $F_\sigma$.

(iii) The relation $S_0$ is $F_{\sigma\delta}$ (i.e., $\mathbf{\Pi}^0_3$.)
\end{lemma}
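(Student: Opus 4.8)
The plan is to establish (i), (ii), (iii) in sequence, since each builds on the previous one, and the main work is really in (i); after that (ii) and (iii) are just counting quantifiers.

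For (i), I would work through the definition of $S_2$ conjunct by conjunct. The clause $R^1(\sigma,\pi,T)$ restricts $T$ to $\mathcal B^1(H)$ and asks $T\sigma_\gamma = \pi_\gamma T$ for all $\gamma\in\Gamma$; by Lemma \ref{l.relation_R} the relation $R$ (hence $R^1$) is \emph{closed} in the weak topology, so this conjunct is closed, a fortiori $F_\sigma$. The second family of conjuncts is indexed by the finitely many pairs $i,j<\lh(\vec\xi)$, and each asks $|\langle\vec\xi_i,\vec\xi_j\rangle - \langle T\vec\xi_i, T\vec\xi_j\rangle| < \varepsilon$. Writing $z = \langle\vec\xi_i,\vec\xi_j\rangle$, this is exactly the relation $Q^{<}(\vec\xi_i,\vec\xi_j, z, \varepsilon, T)$ from part (c) of Lemma \ref{l.transfer}, which is $F_\sigma$ in the weak topology; the dependence of $z$ and of the vectors on $\vec\xi$ is continuous, so each conjunct is $F_\sigma$ as a subset of the full product space. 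Since $\lh(\vec\xi)$ is finite, the conjunction over $i,j$ is a finite intersection of $F_\sigma$ sets, hence $F_\sigma$; intersecting with the closed set coming from $R^1$ keeps it $F_\sigma$. One should note that $\vec\xi$ ranges over $H^{<\N}$, which is a countable disjoint union of the Polish spaces $H^n$, and on each piece the above analysis is uniform, so $S_2$ is $F_\sigma$ overall. This completes (i).

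For (ii), observe that $S_1(\sigma,\pi,\varepsilon,\vec\xi) \iff (\exists T\in\mathcal B^1(H))\, S_2(\sigma,\pi,\varepsilon,\vec\xi,T)$. The key point is that $\mathcal B^1(H)$ with the weak topology is \emph{compact} (and metrizable, since $H$ is separable), so it is a compact Polish space. Projecting an $F_\sigma$ set along a compact factor yields an $F_\sigma$ set: an $F_\sigma$ set is a countable union $\bigcup_n C_n$ of closed sets, the projection distributes over the union, and the projection of each closed subset of a product with a compact factor is closed. Hence $S_1$ is $F_\sigma$.

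For (iii), we have $S_0(\sigma,\pi)\iff(\forall\varepsilon\in\Q_+)(\forall\vec\xi\in H^{<\N})\, S_1(\sigma,\pi,\varepsilon,\vec\xi)$, a universal quantification of the $F_\sigma$ relation $S_1$ over the countable set $\Q_+$ and over $H^{<\N}$. A countable intersection suffices for the $\Q_+$ quantifier, but $H^{<\N}$ is uncountable, so this is genuinely a $(\forall\, \text{real})(F_\sigma)$ operation, which in general only gives $\mathbf{\Pi}^1_1$; the point is that here it collapses to $\mathbf{\Pi}^0_3$. I expect this to be the one place needing a small argument rather than bookkeeping: the standard move is that $S_1$, being $F_\sigma$ in the variable $\vec\xi$ as well, can be written as $\bigcup_m F_m$ with $F_m\subseteq\rep(\Gamma,H)^2\times\Q_+\times H^{<\N}$ closed, and for a closed relation a universal quantifier over a Polish (hence second countable) space can be replaced by a universal quantifier over a countable dense set together with a limiting/continuity argument — i.e., $(\forall\vec\xi)\,S_1$ is equivalent to a statement quantifying only over $\vec\xi$ in a fixed countable dense subset $D\subseteq H^{<\N}$, using that membership in the relevant sets is controlled by the rational parameter $\varepsilon$ and the continuity established in Lemma \ref{l.transfer}. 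Concretely, if $S_1$ holds for all $\vec\xi\in D$ and all $\varepsilon\in\Q_+$, then a density/approximation argument promotes this to all $\vec\xi\in H^{<\N}$ (shrinking $\varepsilon$ slightly to absorb the approximation error). Once $(\forall\vec\xi\in H^{<\N})\,S_1$ is rewritten as $(\forall\vec\xi\in D)(\forall\varepsilon\in\Q_+)\,S_1$, it is a countable intersection of $F_\sigma$ relations, hence $F_{\sigma\delta}$, i.e., $\mathbf{\Pi}^0_3$. The main obstacle, then, is making this reduction to a countable dense set precise and checking that the approximation genuinely works uniformly; everything else is routine hierarchy arithmetic on top of Lemmas \ref{l.transfer} and \ref{l.relation_R}.
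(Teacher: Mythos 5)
Your proposal is correct and follows essentially the same route as the paper's (very terse) proof: (i) via Lemma \ref{l.transfer}(c) plus the weak closedness of $R^1$, (ii) by projecting along the weakly compact metrizable ball $\mathcal B^1(H)$ as in Lemma \ref{l.topological}(2), and (iii) by reducing the quantifier over $H^{<\N}$ to a countable dense set. The only caveat is that this last reduction is not a formal consequence of $S_1$ being $F_\sigma$ in $\vec\xi$ (a union of closed sets containing a dense set need not be everything); it is precisely your concrete $\varepsilon$-shrinking estimate, using $\|T\|\leq 1$ to bound $|\langle T\vec\xi_i,T\vec\xi_j\rangle-\langle T\vec\eta_i,T\vec\eta_j\rangle|$ by the approximation error, that makes it work, and that is the argument the paper intends.
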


We need the following easy topological fact, the proof of which is left to the reader.


\begin{lemma}\label{l.topological}
Let $X$ be a Polish space, $Y$ a compact Polish space, and $A\subseteq X\times Y$. Then:

(1) If $A$ is closed then $\proj_X(A)=\{x\in X:(\exists y\in Y) (x,y)\in A\}$ is closed.

(2) If $A$ is $F_\sigma$ then $\proj_X(A)$ is $F_\sigma$.
\end{lemma}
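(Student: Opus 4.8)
The plan is to prove (1) first and then derive (2) from it by a routine decomposition. For (1) I would exploit that $Y$ is compact, hence sequentially compact, together with the fact that in a metric space closedness coincides with sequential closedness. Concretely, suppose $A\subseteq X\times Y$ is closed and let $(x_n)$ be a sequence in $\proj_X(A)$ converging to some $x\in X$. For each $n$ choose $y_n\in Y$ with $(x_n,y_n)\in A$. By compactness of $Y$ I pass to a subsequence along which $y_{n_k}\to y$ for some $y\in Y$. Then $(x_{n_k},y_{n_k})\to(x,y)$ in the product topology, and since $A$ is closed this forces $(x,y)\in A$, whence $x\in\proj_X(A)$. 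Thus $\proj_X(A)$ is sequentially closed, and as $X$ is metrizable it is closed.

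One could equally well argue by a direct open-cover (``tube'') argument: if $x\notin\proj_X(A)$, then the compact slice $\{x\}\times Y$ lies in the open complement of $A$, and compactness of $Y$ produces a basic open tube $U\times Y$ around the slice still missing $A$, giving an open neighborhood $U$ of $x$ disjoint from $\proj_X(A)$. I mention this variant because it isolates the essential input of the whole lemma, namely the compactness of the fiber $Y$.

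For (2), I would write $A=\bigcup_n A_n$ with each $A_n$ closed in $X\times Y$; this is possible precisely because $A$ is assumed $F_\sigma$ in the product. Since projection commutes with unions, $\proj_X(A)=\bigcup_n\proj_X(A_n)$. By part (1), each $\proj_X(A_n)$ is closed, so $\proj_X(A)$ is a countable union of closed sets, i.e. $F_\sigma$, as claimed.

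The only step that genuinely carries any content — and the step I would single out as the ``hard'' (though still elementary) part — is (1), where the compactness of $Y$ is used in an essential way; that the hypothesis cannot be dropped is witnessed by the projection to the first coordinate of the closed hyperbola $\{(x,y): xy=1\}\subseteq\R^2$, whose image is not closed. Everything in (2) is bookkeeping: it reduces to (1) because the decomposing pieces remain closed in the full product and because projection is union-preserving.
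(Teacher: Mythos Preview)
Your argument is correct and entirely standard: the sequential-compactness step (or the tube-lemma variant) handles (1), and (2) follows by decomposing into closed pieces and using that projection commutes with unions. The paper itself leaves this proof to the reader, so there is no alternative approach to compare against; your write-up is exactly the kind of routine verification the authors had in mind.
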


\begin{proof}[Proof of Lemma \ref{l.relations}]

(i) Follows directly from Lemma \ref{l.transfer} (c).

(ii) Directly from Lemma \ref{l.topological} (2).

(iii) Clear, since it suffices to quantify over sequences in a countable dense subset of $H$.
\end{proof}

We will prove Theorem \ref{mainthm1} and \ref{mainthm2} by proving


\begin{theorem}\label{mainthmv2}
Let $\sigma,\pi\in\rep(\Gamma,H)$. Then $\sigma\simeq\pi$ if and only if $S_0(\sigma,\pi)$ and $S_0(\pi,\sigma)$.
\end{theorem}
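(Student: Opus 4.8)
The plan is to prove the two directions separately. The forward direction $\sigma\simeq\pi\implies S_0(\sigma,\pi)\wedge S_0(\pi,\sigma)$ should be essentially immediate: if $U\in U(H)$ conjugates $\sigma$ to $\pi$, then $U\in R^1_{\sigma,\pi}$ (in fact $U\in R^{\rm u}_{\sigma,\pi}$ with $U^*U=UU^*=1$) and $U$ preserves all inner products exactly, so for every $\varepsilon\in\Q_+$ and every finite sequence $\vec\xi$ we may take $T=U$ to witness $S_1(\sigma,\pi,\varepsilon,\vec\xi)$; thus $S_0(\sigma,\pi)$, and symmetrically $S_0(\pi,\sigma)$ via $U^{-1}$.

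The substantive direction is the converse: assuming $S_0(\sigma,\pi)$ and $S_0(\pi,\sigma)$, produce a genuine unitary conjugacy. First I would unpack what $S_0(\sigma,\pi)$ gives. For each finite $\vec\xi$ and each $\varepsilon$ we get $T\in R^1_{\sigma,\pi}$ that is ``$\varepsilon$-isometric'' on $\vec\xi$. Running this over an increasing sequence of finite subsets of a countable dense set $D\subseteq H$ and over $\varepsilon\to 0$, and using that $\mathcal B^1(H)$ is weakly compact, I would extract (via a diagonal/subnet argument) an operator $T_\infty\in R^1_{\sigma,\pi}$ — closedness of $R^1$ in the weak topology is Lemma \ref{l.relation_R} — with the property that $T_\infty$ is isometric on $\overline{D}=H$, i.e. $T_\infty$ is an isometry intertwining $\sigma$ and $\pi$. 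Concretely, $S_0(\sigma,\pi)$ should be equivalent to: there exists an isometry $u\in R_{\sigma,\pi}$, equivalently $\sigma\precsim\pi$ in the Murray--von Neumann sense (using Lemma \ref{l.polardecomp} to pass from an intertwiner to a partial isometry, and the fact that an isometric intertwiner has $u^*u=1$). Symmetrically $S_0(\pi,\sigma)$ gives $\pi\precsim\sigma$. So the whole theorem reduces to: $\sigma\precsim\pi$ and $\pi\precsim\sigma$ imply $\sigma\simeq\pi$ — a Cantor--Schröder--Bernstein statement for representations.

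The final step is then to establish this Schröder--Bernstein principle. I would pass to direct integral decompositions $((X,\mu),\sigma_x)$ and $((Y,\nu),\pi_y)$ into primary representations (Theorem \ref{t.directintdecomp}), use Lemma \ref{l.decomp} to decompose the intertwining partial isometries $u$ (with $u^*u=1$, giving $\supp(u^*u)=X$) and $v$ fiberwise, obtaining measure-class-preserving injections on the base spaces together with fiberwise sub-equivalences $\sigma_x\precsim\pi_{\phi(x)}$, $\pi_y\precsim\sigma_{\psi(y)}$ between \emph{factor} representations. On the base spaces, the classical Schröder--Bernstein argument for the Borel injections $\phi,\psi$ (together with essential uniqueness of the decomposition, Theorem \ref{t.directintdecomp}) yields a measure-class-preserving bijection matching $\mu$-a.e. $x$ with some $y$ such that $\sigma_x\precsim\pi_y\precsim\sigma_x$; and for factors, $p\precsim q\precsim p$ implies $p\sim q$ by Murray--von Neumann comparison, so $\sigma_x\simeq\pi_y$ a.e. Reassembling the fiberwise unitaries into a direct integral (with the appropriate Radon--Nikodym density factor $\theta^{1/2}$ as in Lemma \ref{l.decomp}) produces the desired $U\in U(H)$ with $U\sigma_\gamma U^{-1}=\pi_\gamma$.

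The main obstacle I anticipate is the first part of the converse: verifying that the local, approximate, $\varepsilon$-isometry data packaged in $S_0$ genuinely yields a \emph{single} exact isometric intertwiner in the weak-topology limit. The weak topology is the natural one for complexity bookkeeping (it makes $R^1$ closed and the relations $F_\sigma$), but multiplication and the map $T\mapsto T\xi$ are \emph{not} weakly continuous (Lemma \ref{l.transfer}(b)), so one must be careful: the inner-product conditions $|\langle\vec\xi_i,\vec\xi_j\rangle-\langle T\vec\xi_i,T\vec\xi_j\rangle|<\varepsilon$ are the right thing to track because $\langle T\xi,T\eta\rangle$ is only weakly $F_\sigma$, not weakly continuous, and passing to the limit requires exploiting that a weak limit of operators that are $\varepsilon$-isometric on a fixed finite set is $\varepsilon$-isometric there, then diagonalizing. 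Getting this extraction argument clean — ensuring the limit operator lies in $R^1_{\sigma,\pi}$ and is \emph{exactly} isometric on all of $H$ — is the technical heart of the proof; the Schröder--Bernstein reassembly, while notationally heavy, is standard direct-integral bookkeeping.
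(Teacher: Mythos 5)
Your high-level skeleton is right: the forward direction is trivial, and the converse should reduce to showing $S_0(\sigma,\pi)\Rightarrow\sigma\precsim\pi$ and then invoking a Schr\"oder--Bernstein principle (which, as you say, for the fiberwise factor representations is just Murray--von Neumann comparison of projections). But the step you yourself flag as ``the technical heart'' --- extracting a single exact isometric intertwiner as a weak limit of the approximate witnesses --- is where the argument breaks, and it breaks irreparably in the form you propose. The condition $|\langle\vec\xi_i,\vec\xi_j\rangle-\langle T\vec\xi_i,T\vec\xi_j\rangle|<\varepsilon$ is \emph{not} weakly closed on $\mathcal B^1(H)$, so it does not pass to weak limits: the powers $S^n$ of the unilateral shift are exact isometries, hence $0$-isometric on every finite set, yet $S^n\to 0$ weakly, and $0$ is not even approximately isometric. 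So a diagonal/subnet weak limit of your $\varepsilon$-isometric intertwiners lands in $R^1_{\sigma,\pi}$ (that much is fine, by Lemma \ref{l.relation_R}) but may well be $0$; compactness of the weak topology buys you nothing here. There is no route from ``approximately isometric on larger and larger finite sets'' to ``exactly isometric'' by topological limit-taking alone.

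The paper's actual argument replaces this limit by an operator-algebraic construction, and this is the content you are missing. One first uses the polar decomposition (Lemma \ref{l.polardecomp}) to convert the witnesses into partial isometries $u_i\in R^{\rm u}_{\sigma,\pi}$ with $p_i=u_i^*u_i\to 1$ strongly (Lemma \ref{l.unitarywitness}); then --- and this is why the reduction to \emph{primary} $\sigma,\pi$ via direct integrals must come \emph{first}, not after --- one works inside the semifinite factor $R_\sigma$: using the trace (in the finite case) to pass to a subsequence with $p_i\precsim p_{i+1}$, rearranging to an increasing chain $q_i\sim p_i$, taking the orthogonal differences $r_i=q_i-q_{i-1}$ with $\sum r_i=1$, matching them with orthogonal differences on the $\pi$ side, and summing the resulting partial isometries $w_i$ (which have mutually orthogonal initial and final projections, so $\sum_i w_i$ converges \emph{strongly} to an exact isometry in $R_{\sigma,\pi}$). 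The type III case is handled separately (all nonzero projections equivalent), and only then does the direct-integral bookkeeping of Lemma \ref{l.embedd} globalize the primary case. Your proposal has the direct-integral and Schr\"oder--Bernstein scaffolding in place, but without the comparison-of-projections construction of the isometry the converse direction has no proof.
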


We first prove Theorem \ref{mainthmv2} for $\sigma$ and $\pi$ primary. Note that when $\sigma,\pi$ are primary it follows by an easy application of Lemma \ref{l.polardecomp} that if $S_0(\sigma,\pi)$ holds and $\sigma$ is type $\III$, then so is $\pi$, and $\sigma\simeq\pi$, since all non-zero projections in a type III factor are equivalent. This leaves us to deal with the semifinite case.


\begin{lemma}\label{l.unitarywitness}
Let $T\in R^1_{\sigma,\pi}$ and let $T=u|T|$ be the polar decomposition, and let $\vec{\xi}$ be a sequence of vectors in $H^1$. Let $p=u^*u$. Then for $0<\varepsilon<1$:

(1) If $S_2(\sigma,\pi,\varepsilon^2,\vec{\xi},T)$ then $\|\vec{\xi}_i-p\vec{\xi_i}\|<\varepsilon$.

(2) If $\|p\vec{\xi}_i-\vec{\xi}_i\|<\varepsilon$ then $S_2(\sigma,\pi,\varepsilon,\vec{\xi}, u)$ holds. 

\noindent In particular, the following are equivalent:

(I) $S_0(\sigma,\pi)$.

(II) For all $\varepsilon>0$ and $\vec{\xi}\in H^{<\N}$ there is a partial isometry $u\in R^1_{\sigma,\pi}$ such that $\|u^*u\vec{\xi_i}-\vec{\xi}_i\|<\varepsilon$.

(III) For some (any) orthonormal basis $(e_n)$ for $H$ it holds that for all $\varepsilon>0$ and $n\in\N$ there is a partial unitary $u\in R^1_{\sigma,\pi}$ such that $\|u^*u e_i-e_i\|<\varepsilon$, for all $i\leq n$.
\end{lemma}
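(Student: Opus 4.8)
The plan is to prove the numbered implications (1) and (2) by direct estimates with the polar decomposition, and then deduce the equivalence of (I), (II), (III) as formal consequences.

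For (1), suppose $S_2(\sigma,\pi,\varepsilon^2,\vec\xi,T)$ holds, so in particular $|\langle\vec\xi_i,\vec\xi_i\rangle-\langle T\vec\xi_i,T\vec\xi_i\rangle|<\varepsilon^2$, i.e. $\|\vec\xi_i\|^2-\|T\vec\xi_i\|^2<\varepsilon^2$. Writing $T=u|T|$ with $p=u^*u$ the projection onto $\overline{\ran(|T|)}=\ker(|T|)^\perp$, I would compute $\|T\vec\xi_i\|=\||T|\vec\xi_i\|$ and compare $\||T|\vec\xi_i\|$ with $\|p\vec\xi_i\|$. Since $\|T\|\le 1$ we have $|T|\le p$ as operators, so $\|T\vec\xi_i\|^2=\langle|T|^2\vec\xi_i,\vec\xi_i\rangle\le\langle p\vec\xi_i,\vec\xi_i\rangle=\|p\vec\xi_i\|^2\le\|\vec\xi_i\|^2$. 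Hence $\|\vec\xi_i\|^2-\|p\vec\xi_i\|^2\le\|\vec\xi_i\|^2-\|T\vec\xi_i\|^2<\varepsilon^2$. Finally $\|\vec\xi_i-p\vec\xi_i\|^2=\|(1-p)\vec\xi_i\|^2=\|\vec\xi_i\|^2-\|p\vec\xi_i\|^2<\varepsilon^2$, giving $\|\vec\xi_i-p\vec\xi_i\|<\varepsilon$.

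For (2), suppose $\|p\vec\xi_i-\vec\xi_i\|<\varepsilon$ for all $i$. By Lemma \ref{l.polardecomp} (or directly from the polar decomposition argument there) $u\in R_{\sigma,\pi}$, and $u$ is a partial isometry in $\mathcal B^1(H)$, so $R^1(\sigma,\pi,u)$ holds. It remains to bound $|\langle\vec\xi_i,\vec\xi_j\rangle-\langle u\vec\xi_i,u\vec\xi_j\rangle|$. Since $u^*u=p$ we have $\langle u\vec\xi_i,u\vec\xi_j\rangle=\langle p\vec\xi_i,\vec\xi_j\rangle$, so the quantity to estimate is $|\langle\vec\xi_i-p\vec\xi_i,\vec\xi_j\rangle|\le\|\vec\xi_i-p\vec\xi_i\|\,\|\vec\xi_j\|<\varepsilon$ because $\|\vec\xi_j\|\le 1$. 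This gives $S_2(\sigma,\pi,\varepsilon,\vec\xi,u)$.

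With (1) and (2) in hand, the equivalence of (I), (II), (III) is bookkeeping. For (I)$\Rightarrow$(II): given $\varepsilon,\vec\xi$, apply $S_0(\sigma,\pi)$ at parameter $\min(\varepsilon,1)^2$ (with the same $\vec\xi$, which we may assume lies in $H^1$ by rescaling) to get $T$ with $S_2(\sigma,\pi,\varepsilon^2,\vec\xi,T)$, then pass to $u=$ the partial isometry in its polar decomposition and invoke (1). For (II)$\Rightarrow$(I): given $\varepsilon\in\Q_+$ and $\vec\xi$, normalize to $H^1$, apply (II) at parameter $\varepsilon$ to get $u$ with $\|u^*u\vec\xi_i-\vec\xi_i\|<\varepsilon$, then (2) yields $S_2(\sigma,\pi,\varepsilon,\vec\xi,u)$, hence $S_1(\sigma,\pi,\varepsilon,\vec\xi)$; quantifying over all $\varepsilon,\vec\xi$ gives $S_0(\sigma,\pi)$. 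Finally (II)$\Leftrightarrow$(III): (II)$\Rightarrow$(III) is immediate by taking $\vec\xi=(e_0,\dots,e_n)$; for (III)$\Rightarrow$(II), given arbitrary $\vec\xi\in H^{<\N}$ and $\varepsilon>0$, approximate each $\vec\xi_i$ to within $\delta$ by a finite linear combination of the $e_k$'s with $k\le n$ for suitable $n$, apply (III) at parameter $\delta'$ for $n$, and note that $u^*u$ being close to the identity on each $e_k$, $k\le n$, forces it to be close to the identity on the chosen linear combinations, hence within $\varepsilon$ of the identity on each $\vec\xi_i$ once $\delta,\delta'$ are chosen small relative to $\varepsilon$ and the norms involved; I would just remark that this is a routine $\varepsilon/3$-style argument. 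The main obstacle, such as it is, is the operator inequality $|T|\le u^*u$ for $\|T\|\le 1$ used in (1); everything else is Cauchy--Schwarz and the already-established properties of the polar decomposition in Lemma \ref{l.polardecomp}.
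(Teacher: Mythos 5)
Your proof is correct and follows essentially the same route as the paper: part (1) comes from the diagonal instance $i=j$ of $S_2$ together with $\|T\xi_i\|=\||T|p\xi_i\|\leq\|p\xi_i\|$ (the paper phrases this via $T=up|T|p$ and $|T|\leq 1$, and in fact your version $\|\xi_i\|^2-\|p\xi_i\|^2<\varepsilon^2$ is the cleaner statement of what the paper abbreviates as $1-\|p\xi_i\|^2<\varepsilon^2$), part (2) is the same identity $\langle u\xi_i,u\xi_j\rangle=\langle p\xi_i,\xi_j\rangle$ plus Cauchy--Schwarz, and the equivalences (I)--(III) are the same bookkeeping the paper leaves to the reader. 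No gaps.
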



\begin{proof}
(1) Since $|T|\leq 1$, $T=u|T|=up|T|p$ and $u$ is a partial unitary, it follows from
$$
|\langle\vec{\xi}_i,\vec{\xi}_j\rangle-\langle T\vec{\xi}_i,T\vec{\xi}_j\rangle|=|\langle\vec{\xi}_i,\vec{\xi}_j\rangle-\langle |T|p\vec{\xi}_i,|T|p\vec{\xi}_j\rangle|<\varepsilon^2
$$
that $1-\|p\vec{\xi}_i\|^2<\varepsilon^2$, from which $\|\vec{\xi}-p\vec{\xi}_i\|<\varepsilon$ follows, since $p$ is an orthogonal projection.

(2) We have
$$
|\langle\vec{\xi}_i,\vec{\xi}_j\rangle-\langle u\vec{\xi}_i,u\vec{\xi}_j\rangle|=|\langle\vec{\xi}_i,\vec{\xi}_j\rangle-\langle u^*u\vec{\xi}_i,\vec{\xi}_j\rangle|=|\langle\vec{\xi}_i-p\vec{\xi}_i,\vec{\xi}_j\rangle|\leq \|\vec{\xi}_i-p\vec{\xi}_i\|<\varepsilon.
$$
Finally, the equivalence (I) and (II) are clear from (1) and (2), and the easy proof that (II) is equivalent to (III) is left to the reader.
\end{proof}


Before proceeding, we make the following easy observation about rearrangement of projections that we use below several times: Suppose $M$ is a semifinite factor, and that $p_i\in M$ are finite projections such that $p_i\precsim p_{i+1}$ for all $i\in\N$. Then we can find projections $q_i\in M$ such that $q_i\sim p_i$ and $q_i\leq q_{i+1}$ for all $i\in\N$. To prove this, let $q_1=p_1$ and let $u\in M$ be a partial unitary such that $p_1=u^*u$ and $p_1'=uu^*\precsim p_2$. Since we must have $1-p_1\sim 1-p_1'$, choose $v\in M$ witnessing this. Since $u$ and $v$ have orthogonal domain and range projections, $w=u+v\in M$ is unitary, and an easy calculation shows that $wp_1w^*=p_1'$. Let $q_2=w^* p_2 w$, and proceed inductively.


\begin{lemma}
Let $\sigma,\pi\in\rep(\Gamma,H)$ be primary representations. Then $S_0(\sigma,\pi)$ implies $\sigma\precsim\pi$.
\end{lemma}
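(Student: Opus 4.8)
The plan is to pass to the direct sum $\rho=\sigma\oplus\pi$ acting on $H\oplus H$ and reinterpret $\sigma\precsim\pi$ as a comparison of two projections inside the intertwiner algebra $R_\rho$. Since the type $\III$ case has already been settled (there $S_0(\sigma,\pi)$ forces $\sigma\simeq\pi$), I would assume $\sigma$, and hence $\pi$, semifinite. First I would record that $S_0(\sigma,\pi)$ produces a nonzero intertwiner: applying condition (II) of Lemma \ref{l.unitarywitness} to a single unit vector yields $u\in R^{\rm u}_{\sigma,\pi}$ with $u^*u\neq0$, so $R_{\sigma,\pi}\neq\{0\}$ and $\sigma\not\perp\pi$. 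Writing elements of $R_\rho$ as $2\times2$ operator matrices with entries in $R_\sigma$, $R_\pi$, $R_{\sigma,\pi}$, $R_{\pi,\sigma}$, a central element must be diagonal with scalar entries $z_1,z_2$ (as $R_\sigma,R_\pi$ are factors) and must commute with the nonzero off-diagonal intertwiner, which forces $z_1=z_2$. Hence $R_\rho$ is a factor, and since $R_\sigma=PR_\rho P$ is a corner of $R_\rho$, it is of the same semifinite type as $\sigma$ and $\pi$. I then set $P=1_{R_\sigma}$ and $Q=1_{R_\pi}$, viewed as projections in $R_\rho$; by definition $\sigma\precsim\pi$ is exactly the assertion $P\precsim Q$ in $R_\rho$. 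Fix a faithful normal semifinite trace $\tau$ on $R_\rho$.

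If $\sigma$ is type $\I_\infty$ or $\II_\infty$, then $P$ and $Q$ are properly infinite projections in the factor $R_\rho$ (every nonzero projection in a factor has central support $1$), so they are equivalent; thus $\sigma\simeq\pi$ and in particular $\sigma\precsim\pi$. Note that in this case $S_0$ is used only to guarantee $\sigma\not\perp\pi$. This leaves the genuinely substantive case where $\sigma$ is finite, i.e.\ type $\I_n$ ($n<\infty$) or $\II_1$, so that $P$ and $Q$ are finite projections and $\tau(P),\tau(Q)<\infty$.

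The crux is this finite case, and the key realization is that the vectorwise approximation supplied by $S_0$ must be fed the \emph{right} vectors. Knowing only that $\|u^*ue_i-e_i\|<\varepsilon$ for the first $n$ members of a fixed orthonormal basis does \emph{not} bound $\tau(u^*u)$ away from small values, since a projection of small trace can have its range approximately contain any prescribed finite set of vectors. Instead I would exploit the intrinsic trace. Let $\tau_0$ be the normalized trace on the finite factor $R_\sigma$ furnished by the trace theorem recalled above. As a normal state on $R_\sigma\subseteq\mathcal B(H)$, $\tau_0$ is a countable sum of vector functionals $\sum_i\omega_{\zeta_i}$ with $\zeta_i\in H$; truncating, for each $\delta>0$ I fix $m$ with $\|\tau_0-\sum_{i\le m}\omega_{\zeta_i}\|<\delta$. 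Applying condition (II) of Lemma \ref{l.unitarywitness} to $\vec\zeta=(\zeta_1,\dots,\zeta_m)$ and $\varepsilon>0$ produces $u\in R^{\rm u}_{\sigma,\pi}$ with $\|u^*u\zeta_i-\zeta_i\|<\varepsilon$; since $p=u^*u$ is a projection, $\langle(1-p)\zeta_i,\zeta_i\rangle=\|(1-p)\zeta_i\|^2<\varepsilon^2$, whence $\tau_0(1-p)<m\varepsilon^2+\delta$. Letting $\varepsilon\to0$ and then $\delta\to0$ yields $\sup\{\tau_0(u^*u):u\in R^{\rm u}_{\sigma,\pi}\}=1$.

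Finally I would transport this back to $R_\rho$. Since $R_\sigma=PR_\rho P$ is a finite factor, the restriction $\tau\restrict R_\sigma$ is proportional to $\tau_0$, namely $\tau\restrict R_\sigma=\tau(P)\,\tau_0$, so $\sup_u\tau(u^*u)=\tau(P)$. On the other hand, for every $u\in R^{\rm u}_{\sigma,\pi}$ we have $u^*u\sim uu^*\le Q$ in $R_\rho$, hence $\tau(u^*u)=\tau(uu^*)\le\tau(Q)$. Combining these gives $\tau(P)\le\tau(Q)$, and since $R_\rho$ is a semifinite factor this forces $P\precsim Q$, i.e.\ $\sigma\precsim\pi$. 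I expect the main obstacle to be exactly the observation highlighted above, namely that the trace cannot be read off a fixed basis, which is what dictates feeding $S_0$ a finite system of vectors whose functionals approximate $\tau_0$; the supporting facts (that $R_\rho$ is a factor, that finite factors carry a faithful normal trace, that normal states expand as vector functionals, and that $\tau$ restricts proportionally to the corner) are routine.
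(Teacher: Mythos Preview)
Your overall strategy---passing to the factor $R_\rho=R_{\sigma\oplus\pi}$ and reducing $\sigma\precsim\pi$ to the trace inequality $\tau(P)\le\tau(Q)$---is sound and genuinely different from the paper's approach. The paper never forms $R_\rho$; instead it works inside $R_\sigma$ and $R_\pi$ separately, obtains projections $p_k=u_k^*u_k\to 1$ strongly, rearranges them into an increasing sequence, and builds the witnessing isometry explicitly by summing partial isometries over the successive differences. Your route is more conceptual and avoids that construction entirely, at the cost of importing the comparison theory for the ambient factor.

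There is, however, a real gap in your properly infinite case. You assert that if $\sigma$ is type $\I_\infty$ or $\II_\infty$ then $Q$ is properly infinite, and you then explicitly claim that here ``$S_0$ is used only to guarantee $\sigma\not\perp\pi$''. That is false: $\sigma\not\perp\pi$ by itself does not prevent $\pi$ from being finite (a $\II_\infty$ and a $\II_1$ representation can be non-disjoint), and in that scenario $Q$ is a finite projection with $P\not\precsim Q$. You must use $S_0$ more substantially: the partial isometries it supplies give $p_k=u_k^*u_k\to P$ strongly with $\tau(p_k)=\tau(u_ku_k^*)\le\tau(Q)$, and $\sigma$-weak lower semicontinuity of the normal semifinite trace then yields $\tau(P)\le\liminf\tau(p_k)\le\tau(Q)$, forcing $Q$ infinite. (The paper sidesteps this entirely by proving the finite case first and then reducing the general semifinite case to it via an increasing sequence of finite subprojections of $1\in R_\sigma$.)

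Separately, your detour through the vector-functional decomposition of $\tau_0$ is correct but unnecessary, and your stated reason for it is misleading. You observe that $\|u^*ue_i-e_i\|<\varepsilon$ for $i\le n$ does not bound $\tau_0(u^*u)$ from below; that is true for a \emph{fixed} $n$, but the paper does not argue from a single $u$. It lets $n\to\infty$ and $\varepsilon\to 0$, obtaining $p_k\to 1$ strongly, and then invokes only that $\tau_0$ is a \emph{normal} state (hence continuous for bounded strong convergence) to conclude $\tau_0(p_k)\to 1$ directly. Your expansion $\tau_0=\sum_i\omega_{\zeta_i}$ is just one way of expressing this normality; feeding the $\zeta_i$ to $S_0$ gains nothing over feeding basis vectors.
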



\begin{proof}
As observed after the statement of Theorem \ref{mainthmv2}, it suffices to consider the case when $R_\sigma$ is semifinite.

Assume first that $R_\sigma$ is a finite factor, and let $\tau:R_\sigma\to\C$ be the normalized trace. Since $S_0(\sigma,\pi)$ holds, we can find partial isometries $u_i\in R_{\sigma,\pi}$ such that $p_i=u_i^*u_i\to 1$ strongly, and so $\tau(p_i)\to 1$. Thus, after possibly going to a subsequence, we can assume that $p_i\precsim p_{i+1}$ for all $i\in\N$. By the observation preceding the Lemma, we can then find projections $q_i\in R_\sigma$ such that $q_i\sim p_i$ and $q_i\leq q_{i+1}$ for all $i\in\N$. Note that if $u\in R_\sigma^{\rm u}$ is such that $q_i=u^*u$ and $p_i=uu^*$, then $u_iu\in R_{\sigma,\pi}^{\rm u}$ witnesses the equivalence of $q_i$ and $\bar p_i=u_iu_i^*\in R_\pi$.

Clearly $\bar p_i\precsim \bar p_{i+1}$, and so we can again find $\bar q_i\in R_\pi$ such that $\bar p_i\sim \bar q_i$ and $\bar q_i\leq \bar q_{i+1}$ for all $i\in\N$.

Now let $r_1=q_1$, $\bar r_1=\bar q_1$, and for $i>1$, let $r_i=q_i-q_{i-1}$, $\bar r_i=\bar q_i-\bar q_{i-1}$. Then $(r_i)$ and $(\bar r_i)$ are sequences of mutually orthogonal projections, and since $\sum_{i=1}^n r_i=q_n$, we have $\sum_{i=1}^\infty r_i=1$.  Furthermore, $r_i\sim \bar r_i$ for all $i\in\N$ (see \cite[III.1.3.8]{blackadar06}), so fix $w_i\in R_{\sigma,\pi}^{\rm u}$ such that $r_i=w_i^*w_i$ and $\bar r_i=w_iw_i^*$. Then $\sum_{i=1}^\infty w_i\in R_{\sigma,\pi}$ is an isometry, and so $\sigma\precsim\pi$.

Dropping the assumption that $R_\sigma$ is finite, then it follows from the above that $\sigma|p\precsim\pi$ for all finite projections $p\in R_\sigma$. So if we take $p_i$ to be any increasing sequence of finite projections such that $p_i\to 1$, then we can make exactly the same argument as above to obtain that $\sigma\precsim\pi$.
\end{proof}


\begin{proof}[Proof of Theorem \ref{mainthmv2} for $\sigma,\pi$ primary.]
We may assume that $\sigma$ and $\pi$ are semifinite primary representations. By the previous lemma we have $\sigma\precsim \pi$, and so if $\sigma$ is properly infinite semifinite (i.e. type $\I_\infty$ or $\II_\infty$), then so is $\pi$, and since it then follows that $\pi$ is isomorphic to its restriction to any infinite projection in $R_\pi$, we have that $\sigma\simeq\pi$. So we may assume that $\sigma$ and $\pi$ are finite. Let $U:H\to H$ and $V:H\to H$ be isometries witnessing that $\sigma\precsim\pi$ and $\pi\precsim\sigma$. Then, since $\sigma$ is not equivalent to its restriction to any proper subprojection of $1$, it follows that $VU=1_H$, and symmetrically, that $UV=1_H$. Thus $U^{-1}=V$, and $U$ witnesses that $\sigma\simeq\pi$.
\end{proof}

We now proceed to consider the case when $\sigma$ and $\pi$ are not necessarily primary representations.


\begin{lemma}\label{l.embedd}
Let $\sigma_i\in\rep(\Gamma,H_i)$, $i\in\{0,1\}$, and let $((X_i,\mu_i),\sigma_{i,x}\actson H_{i,x})$ be the corresponding decompositions into primary representations. Suppose $S_0(\sigma_0,\sigma_1)$ holds. Then there is a measure class preserving injection $\phi:X_0\to X_1$ such that
\begin{equation}\label{eq.decomp}
(\forall^{\mu_0}x\in X_0)S_0(\sigma_{0,x},\sigma_{1,\phi(x)}).
\end{equation}
In  particular, and $\sigma_0\precsim\sigma_1$.
\end{lemma}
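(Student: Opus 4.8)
The plan is to disintegrate the hypothesis $S_0(\sigma_0,\sigma_1)$ along the two direct integral decompositions and assemble a single measure-class-preserving injection $\phi$ from countably many partial isometries. First I would use the characterization (II) of Lemma \ref{l.unitarywitness}: $S_0(\sigma_0,\sigma_1)$ gives, for each $n$, a partial isometry $u^{(n)}\in R^{\rm u}_{\sigma_0,\sigma_1}$ with $\|(u^{(n)})^*u^{(n)}e_i-e_i\|<1/n$ for $i\le n$, where $(e_i)$ is a fixed orthonormal basis of $H_0$. Writing $p_n=(u^{(n)})^*u^{(n)}$, we get $p_n\to 1$ strongly, hence (passing to a subsequence) we may assume $\mu_0(\supp((u^{(n)})^*u^{(n)}))$ is increasing and its union is conull; in fact a small rearrangement as in the paragraph before Lemma 3.6 lets me assume $\supp(p_n)\nearrow X_0$ (mod null). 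Each $u^{(n)}$ decomposes, via Lemma \ref{l.decomp}, as a measure-class-preserving bijection $\phi_n:\supp(p_n)\to\supp(u^{(n)}(u^{(n)})^*)\subseteq X_1$ together with fibered isometries $u^{(n)}_x\in R^{\rm u}_{\sigma_{0,x},\sigma_{1,\phi_n(x)}}$, so that $\phi_n$ already witnesses $(\forall^{\mu_0}x\in\supp(p_n))\ \sigma_{0,x}\precsim\sigma_{1,\phi_n(x)}$.

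Next I would upgrade ``$\precsim$ on the fibers'' to ``$S_0$ on the fibers.'' The point is that the fibered isometry $u^{(n)}_x$ is an honest element of $R^1_{\sigma_{0,x},\sigma_{1,\phi_n(x)}}$ with $(u^{(n)}_x)^*u^{(n)}_x$ a projection, and an isometry trivially satisfies the $S_2$-condition with $\varepsilon=0$ for every finite sequence of unit vectors in its domain; after disintegrating the defining condition of $S_0(\sigma_0,\sigma_1)$ over $X_0$ one checks that for a.e.\ $x$ the fibers inherit the full strength of $S_0(\sigma_{0,x},\sigma_{1,\phi_n(x)})$, not merely $\precsim$. Concretely, for fixed $\vec\xi\in H_0^{<\N}$ and $\varepsilon\in\Q_+$, the set of $x$ where the fiber witness fails to exist would contradict $S_1(\sigma_0,\sigma_1,\varepsilon,\vec\xi)$ (via a measurable selection of fiber witnesses and reassembly into a global intertwiner over that set), so it is null; intersecting over the countably many $(\vec\xi,\varepsilon)$ ranging over a countable dense set, $S_0(\sigma_{0,x},\sigma_{1,\phi_n(x)})$ holds a.e.

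The remaining task is to glue the $\phi_n$ into one injection $\phi$ defined a.e.\ on $X_0$. By essential uniqueness of the primary decomposition (Theorem \ref{t.directintdecomp}) and the disjointness clause (c), distinct fibers of $\sigma_1$ are pairwise disjoint, and by Remark \ref{rem.transitivity} the relation $\sigma_{1,y}\not\perp\sigma_{1,y'}$ is an equivalence relation on primary representations; since $\sigma_{0,x}$ is $\not\perp$ to at most one $\sigma_1$-fiber up to isomorphism, $\phi_{n+1}$ must agree (mod null, after identifying isomorphic fibers) with $\phi_n$ on $\supp(p_n)$. Thus the $\phi_n$ cohere and their union $\phi:\bigcup_n\supp(p_n)\to X_1$ is a well-defined a.e.\ injection; it is measure-class preserving because each $\phi_n$ is and $\bigcup_n\supp(p_n)$ is $\mu_0$-conull. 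Finally, since $\sigma_{0,x}\precsim\sigma_{1,\phi(x)}$ fiberwise (a.e.), integrating the fiber isometries yields an isometry in $R_{\sigma_0,\sigma_1}$, giving $\sigma_0\precsim\sigma_1$. I expect the main obstacle to be the coherence argument for the $\phi_n$: one must be careful that Lemma \ref{l.decomp} pins down $\phi_n$ only up to isomorphism-of-fibers and up to null sets, so making ``$\phi_{n+1}\restrict\supp(p_n)=\phi_n$'' literally true requires either working in the quotient by fiber-isomorphism or choosing the decompositions compatibly and invoking the comparison theory in each ($\not\perp$)-class with some care about measurability of the resulting selections.
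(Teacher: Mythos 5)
Your overall architecture coincides with the paper's: decompose the partial isometries $u^{(n)}$ via Lemma \ref{l.decomp}, glue the resulting maps $\phi_n$ using transitivity of $\not\perp$ on primary representations together with pairwise disjointness of the $\sigma_1$-fibers, and verify \eqref{eq.decomp} by disintegration. The gluing step, which you flag as the main obstacle, is in fact unproblematic and you handle it exactly as the paper does; note that clause (c) of the definition of a primary decomposition makes distinct $\sigma_1$-fibers literally disjoint, so $\phi_u$ is determined $\mu_0$-a.e.\ on its support outright, not merely up to fiber isomorphism.

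The genuine gap is in the step you describe as ``one checks that for a.e.\ $x$ the fibers inherit the full strength of $S_0$.'' The justification you offer --- that a positive-measure set of $x$ without fiber witnesses would contradict $S_1(\sigma_0,\sigma_1,\varepsilon,\vec\xi)$ ``via a measurable selection of fiber witnesses and reassembly into a global intertwiner'' --- runs the implication in the wrong direction: reassembling fiber witnesses produces a global witness from fiberwise data, whereas what is needed here is to pass from a global witness to fiberwise witnesses on a large set. That passage is not automatic, because for a general $T$ the fiberwise error $\langle\xi_{i,x},\xi_{j,x}\rangle-\langle T_x\xi_{i,x},T_x\xi_{j,x}\rangle$ is not sign-definite when $i\neq j$, so a small integral does not control the integrand off a small set. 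The paper makes this work by (a) reducing via Lemma \ref{l.unitarywitness} to partial isometries and to the single non-negative fiberwise quantity $\|u_x^*u_xe_{i,x}-e_{i,x}\|^2$, tested against a fixed measurable field of orthonormal bases (criterion (III)), and (b) a quantitative Chebyshev estimate: a global witness with tolerance $\varepsilon^3$ gives $\int\|u_x^*u_xe_{i,x}-e_{i,x}\|^2\,d\mu_0<\varepsilon^3$, hence fiber witnesses with tolerance $\varepsilon$ outside a set of measure $<\varepsilon$; for fixed $n$ the bad sets increase as $\varepsilon\downarrow 0$, so each is null, yielding \eqref{eq.decomp}. Your setup with $\|p_ne_i-e_i\|<1/n$ contains the raw material for such an estimate, but the argument itself --- the sign-definiteness, the choice of a countable family of fiber test vectors dense in almost every fiber, and the measure estimate --- is the actual content of the lemma and is missing. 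Relatedly, your closing sentence ``integrating the fiber isometries yields an isometry'' presupposes fiberwise isometries, which only become available by applying the primary case ($S_0(\sigma_{0,x},\sigma_{1,\phi(x)})\Rightarrow\sigma_{0,x}\precsim\sigma_{1,\phi(x)}$) after \eqref{eq.decomp} is established; the $u^{(n)}_x$ themselves are in general only partial isometries.
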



\begin{proof}
Without loss of generality, $\mu_0(X_0)=1$.

By Lemma \ref{l.decomp}, for a partial isometry $u\in R_{\sigma_0,\sigma_1}$ we have a measure class preserving injection $\phi_u:\supp(u^*u)\to X_1$ and an assignment $x\mapsto u_x\in R_{\sigma_{0,x},\sigma_{1,\phi(x)}}$ of partial isometries such that 
$$
u=\int \theta(x)^{\frac 1 2} u_x d\mu_0(x).
$$
Note that if $u,v\in R_{\sigma_0,\sigma_1}$ are partial isometries, then $\phi_u(x)=\phi_v(x)$ for almost all $x\in\supp(u)\cap\supp(v)$. To see this, note that $u_x$ witnesses that $\sigma_{0,x}\not\perp\sigma_{1,\phi_u(x)}$ and $v_x$ witnesses that $\sigma_{0,x}\not\perp\sigma_{1,\phi_v(x)}$, and for primary representations the relation $\not\perp$ is transitive. Hence $\phi_u(x)=\phi_v(x)$ a.e. where both are defined, since $\sigma_{1,x}\perp\sigma_{1,x'}$ whenever $x\neq x'$.

Since $S_0(\sigma_0,\sigma_1)$ holds we can find a sequence $(u_n)_{n\in\N}$ in $R^{\rm u}_{\sigma_0,\sigma_1}$ such that $\supp(u_n^*u_n)>1-\frac 1 n$. Defining
$$
\phi=\bigcup_{n\in\N} \phi_{u_n},
$$
it is then clear that $\phi$ defines a measure class preserving injection almost everywhere on $X_0$. We claim that this $\phi$ works.

To see this, let $x\mapsto e_{n,x}\in H_{0,x}$ be a measurable assignment of orthonormal bases\footnote{For simplicity, we assume that $\dim(H_{i,x})=\infty$ a.e.}, and let $e_n=\int e_{n,x}d\mu_0(x)$. By Lemma \ref{l.unitarywitness} it suffices to show that for all $\varepsilon>0$ and $n\in\N$ we have
$$
\alpha_{\varepsilon,n}=\mu_0(\{x\in X_0: (\exists u\in R^{\rm u}_{\sigma_{0,x},\sigma_{1,\phi(x)}})(\forall i\leq n) \|u^*u e_{i,x}-e_{i,x}\|<\varepsilon\})>1-\varepsilon.
$$

For this, let $u\in R^{\rm u}_{\sigma_0,\sigma_1}$ be a partial isometry such that $\|u^*ue_i-e_i\|<\varepsilon^3$ for all $i\leq n$. Decompose $u$ as $u=\int \theta(x)^{\frac 1 2} u_xd\mu_0(x)$ for some assignment $x\mapsto u_x\in R_{\sigma_{0,x},\sigma_{1,\phi(x)}}$. Then since
$$
\|u^*ue_i-e_i\|^2=\int \|u_x^*u_x e_{i,x}-e_{i,x}\|^2 d\mu_0(x)<\varepsilon^3,
$$
it follows that $(1-\alpha_{\varepsilon,n})\varepsilon^2<\varepsilon^3$, and so $\alpha_{\varepsilon,n}>1-\varepsilon$.
\end{proof}


\begin{proof}[Proof of Theorem \ref{mainthmv2}]
Fix direct integral decompositions of $\sigma_0$ and $\sigma_1$ as in Lemma \ref{l.embedd}. Applying Lemma \ref{l.embedd}, there are measure class preserving injections $\phi:X_0\to X_1$ and $\psi: X_1\to X_0$ such that $\sigma_{0,x}\simeq \sigma_{1,\phi(x)}$ and $\sigma_{1,y}\simeq\sigma_{0,\psi(y)}$. Then we must have $\psi=\phi^{-1}$, from which $\sigma_0\simeq\sigma_1$ follows.
\end{proof}


\begin{remark}
Theorem \ref{mainthmv2} is best possible in general, since we have the following:


\begin{prop}\label{prop.complete}
If $\Gamma$ has uncountably many disjoint primary representations then $\simeq\subseteq \rep(\Gamma,H)^2$ is $\mathbf\Pi^0_3$-complete as a set (in the sense of \cite[21.13]{kechris95}.)
\end{prop}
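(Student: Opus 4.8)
The plan is to leverage Theorems~\ref{mainthm1} and \ref{mainthm2}: since $\simeq\,\subseteq\rep(\Gamma,H)^2$ is $F_{\sigma\delta}$, hence $\mathbf\Pi^0_3$, it suffices to exhibit a \emph{continuous} reduction to $\simeq$ of a fixed $\mathbf\Pi^0_3$-complete set, for instance
$$
P=\{x\in 2^{\N\times\N}:(\forall n)\ \{m:x(n,m)=1\}\ \text{is finite}\}
$$
(which is $\mathbf\Pi^0_3$-complete, see \cite{kechris95}). The idea is to embed into $\simeq$ the relation $\nu\equiv\nu'$ of mutual absolute continuity of Borel probability measures on $2^{\N}$, by forming direct integrals over a Cantor set of pairwise disjoint primary representations; this last step is the only place the hypothesis on $\Gamma$ is used.

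First I would produce the Cantor set of ``colours''. By classical results (see \cite{effros65}) the primary representations form a Borel subset of $\rep(\Gamma,H)$ and $\perp$ is a Borel relation, so by Remark~\ref{rem.transitivity} $\not\perp$ is a Borel equivalence relation on the primary representations, with uncountably many classes by hypothesis. Silver's theorem (see \cite{kechris95}) then yields a continuous injection $t\mapsto\rho_t$ of $2^{\N}$ into the primary representations with $\rho_s\perp\rho_t$ for $s\neq t$; in particular the $\rho_t$ are pairwise non-isomorphic, since an isomorphism would produce a unitary in $R_{\rho_s,\rho_t}$. For a Borel probability measure $\nu$ on $2^{\N}$ put $\Pi_\nu=\int\rho_t\,d\nu(t)$ with multiplicity one, realised inside $\rep(\Gamma,H)$ after identifying $\int H\,d\nu(t)\cong L^2(\nu)\otimes H$ with $H$. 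Since $((2^{\N},\nu),\,t\mapsto\rho_t)$ is then a direct integral decomposition of $\Pi_\nu$ into primary representations satisfying (c$'$), the uniqueness part of Theorem~\ref{t.directintdecomp} shows that $\Pi_\nu\simeq\Pi_{\nu'}$ holds iff there is a measure-class preserving bijection $\phi$ of $2^{\N}$ with $\rho_t\simeq\rho_{\phi(t)}$ for $\nu$-a.e.\ $t$; since the $\rho_t$ are pairwise non-isomorphic this forces $\phi=\mathrm{id}$ a.e., and hence $\Pi_\nu\simeq\Pi_{\nu'}\iff\nu\equiv\nu'$. One also checks that $\nu\mapsto\Pi_\nu$ is continuous for the weak$^*$ topology on measures.

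Next I would build the reduction itself. Fix the fair-coin product measure $\mu_0$ on $2^{\N}$, a clopen partition $2^{\N}=\bigsqcup_n C_n$ with $\mu_0(C_n)>0$ and each $C_n$ identified with $2^{\N}$, and for $y\in 2^{\N}$ let $\lambda_y$ be the product measure on $C_n\cong 2^{\N}$ whose $m$-th coordinate is a fair coin when $y(m)=0$ and the point mass at $0$ when $y(m)=1$. A brief Kakutani (Hellinger) computation shows that $\lambda_y$ is equivalent to $\mu_0|_{C_n}$ when $\{m:y(m)=1\}$ is finite and is singular to it otherwise. Putting $\nu_x=\sum_n 2^{-n-1}\lambda_{x(n,\cdot)}$, the map $x\mapsto\nu_x$ is weak$^*$-continuous, and since the $C_n$ are disjoint, $\nu_x\ll\mu_0$ holds iff $\lambda_{x(n,\cdot)}\ll\mu_0|_{C_n}$ for every $n$, i.e.\ (by the dichotomy just noted) iff every row of $x$ has finitely many $1$'s, i.e.\ iff $x\in P$. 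Since $(\mu_0+\nu_x)/2$ is always $\gg\mu_0$ and is $\ll\mu_0$ exactly when $\nu_x\ll\mu_0$, we obtain $(\mu_0+\nu_x)/2\equiv\mu_0\iff x\in P$, so that
$$
x\longmapsto\bigl(\Pi_{\mu_0},\ \Pi_{(\mu_0+\nu_x)/2}\bigr)
$$
is a continuous reduction of $P$ to $\simeq$; as $\simeq$ is itself $\mathbf\Pi^0_3$, this shows it is $\mathbf\Pi^0_3$-complete.

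The hard part will be making everything genuinely continuous. One must obtain the injection $t\mapsto\rho_t$ continuous into $\rep(\Gamma,H)$ (refining, if necessary, the perfect set produced by Silver's theorem), and --- more delicate --- one must choose the unitaries identifying $L^2(\nu)\otimes H$ with $H$ so as to depend continuously on $\nu$ (this is where $\dim H=\aleph_0$ is used, so that $L^2(\nu)\otimes H\cong H$ for every probability measure $\nu$), so that $\nu\mapsto\Pi_\nu$ really is a continuous map into $\rep(\Gamma,H)$. The remaining ingredients --- the Kakutani computation, weak$^*$-continuity of $x\mapsto\nu_x$, and extracting the measure-class statement from Theorem~\ref{t.directintdecomp} --- are routine.
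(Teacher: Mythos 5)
Your overall strategy --- use Silver's theorem to extract a Cantor set $\{\rho_t\}_{t\in 2^\N}$ of pairwise disjoint primaries, attach to each measure $\nu$ on $2^\N$ the representation $\Pi_\nu=\int\rho_t\,d\nu(t)$, and reduce a $\mathbf\Pi^0_3$-complete set through absolute continuity of product measures --- is a genuinely different route from the paper's, and the measure-theoretic half of it is essentially sound. (One small slip: when $\{m:y(m)=1\}$ is finite and nonempty, $\lambda_y$ is supported on a proper cylinder of $C_n$, so $\lambda_y\ll\mu_0|_{C_n}$ but $\lambda_y\not\equiv\mu_0|_{C_n}$; this does not hurt you, since your final criterion only needs $(\mu_0+\nu_x)/2\equiv\mu_0\iff\nu_x\ll\mu_0$.) The step you defer as ``the hard part,'' however, is a genuine gap and not a routine verification. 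To even have a single-valued map $\nu\mapsto\Pi_\nu$ into $\rep(\Gamma,H)$ you must choose, for each $\nu$ in your family, a unitary $L^2(\nu)\otimes H\to H$, and you need these choices to cohere continuously in the weak$^*$ topology. For your family this is problematic: the measures $(\mu_0+\nu_x)/2$ involve the $\lambda_y$, uncountably many of which are pairwise mutually singular, so there is no $\sigma$-finite measure dominating them all and hence no fixed $L^2(\zeta;H)$ inside which all the spaces $L^2((\mu_0+\nu_x)/2;H)$ can be realized. No construction of the required unitaries is given, and I do not see an obvious one; at best one can make a Borel (measurable) selection, which yields only a Borel reduction, and then $\mathbf\Pi^0_3$-completeness in the continuous (Wadge) sense of \cite[21.13]{kechris95} requires an extra appeal to Borel--Wadge determinacy that you do not make.

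The paper sidesteps all of this by discretizing: after arranging (as one may) that each $f(y)$ from the Silver family satisfies $f(y)\simeq\bigoplus_{n=1}^\infty f(y)$, it sets $g(x)=\bigoplus_{n=1}^\infty f(x(n,\cdot))$ and reduces the $\mathbf\Pi^0_3$-complete relation $F_2$ (``same set of rows,'' Lemma \ref{l.F_2}), to which $P_3$ reduces by padding with a fixed enumeration of the eventually zero sequences. A countable direct sum lives on the fixed space $\bigoplus_{n=1}^\infty H$, each summand depends continuously on one row of $x$, so continuity of $g$ is immediate, and the equivalence $g(x)\simeq g(y)\iff xF_2y$ follows from pairwise disjointness of the $f(z)$ together with the amplification property --- no direct integrals, no Radon--Nikodym derivatives, no Theorem \ref{t.directintdecomp}. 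If you want to keep your measure-theoretic reduction, you should either restrict to a family of measures with a common dominating measure (which your Kakutani dichotomy does not permit) or replace the direct integral by a direct sum indexed by the rows, which collapses your argument into the paper's.
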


For the proof we need the following: Define as in \cite[2.22]{hjorth00} an equivalence relation on $2^{\N\times\N}$ by
$$
x F_2 y\iff (\forall n)(\exists m)(\forall k) x(n,k)=y(m,k)\wedge (\forall m)(\exists n)(\forall k) x(n,k)=y(m,k).
$$
If we let $x(n,\cdot)\in 2^\N$ denote the $n$'th ``row'' in $x\in 2^{\N\times\N}$, then this means that 
$$
xF_2y\iff \{x(n,\cdot):n\in\N\}=\{y(n,\cdot):n\in\N\}.
$$
Then we have


\begin{lemma}[Folklore]\label{l.F_2}
The relation $F_2$ is a $\mathbf\Pi^0_3$-complete subset of $2^{\N\times\N}\times 2^{\N\times\N}$.
\end{lemma}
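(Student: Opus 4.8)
The plan is to treat the two requirements in ``$\mathbf\Pi^0_3$-complete'' separately: first read off $F_2\in\mathbf\Pi^0_3$ from its definition, and then prove $\mathbf\Pi^0_3$-hardness by a short continuous reduction from a standard $\mathbf\Pi^0_3$-complete set.

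For the upper bound I would observe that in the displayed definition of $F_2$ the clause $(\forall k)\,x(n,k)=y(m,k)$ is closed in $(x,y)$; prefixing $(\exists m)$ yields a $\mathbf\Sigma^0_2$ set, and then $(\forall n)$ yields a $\mathbf\Pi^0_3$ set. The same applies to the second conjunct, so $F_2$, being an intersection of two $\mathbf\Pi^0_3$ sets, is $\mathbf\Pi^0_3$.

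For hardness I would reduce from $Q=\{z\in 2^{\N\times\N}:(\forall n)\ z(n,\cdot)\ \text{has finite support}\}$, which is $\mathbf\Pi^0_3$-complete: the set $E=\{s\in 2^{\N}:s\ \text{has finite support}\}$ is $\mathbf\Sigma^0_2$-complete, its complement being the canonical $\mathbf\Pi^0_2$-complete set of sequences with infinitely many $1$'s (see \cite[23.A]{kechris95}), and a countable coordinatewise product of $\mathbf\Sigma^0_2$-complete sets is $\mathbf\Pi^0_3$-complete (see \cite[\S 23]{kechris95}). Now fix an enumeration $(d_i)_{i\in\N}$ of the countably infinite set $E$; let $y\in 2^{\N\times\N}$ be the fixed element whose $i$-th row is $d_i$, so that the set of rows of $y$ is exactly $E$; and define a continuous map $f:2^{\N\times\N}\to 2^{\N\times\N}$ by declaring the $2i$-th row of $f(z)$ to be $d_i$ and the $(2i+1)$-st row of $f(z)$ to be $z(i,\cdot)$ (each coordinate of $f(z)$ is then either a constant or a single coordinate of $z$, so $f$ is continuous). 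The set of rows of $f(z)$ is $E\cup\{z(i,\cdot):i\in\N\}$, and therefore
$$
f(z)\ F_2\ y\ \Longleftrightarrow\ E\cup\{z(i,\cdot):i\in\N\}=E\ \Longleftrightarrow\ (\forall i)\ z(i,\cdot)\in E\ \Longleftrightarrow\ z\in Q .
$$
Hence $z\mapsto(f(z),y)$ is a continuous reduction of $Q$ to $F_2$, so $F_2$ is $\mathbf\Pi^0_3$-hard; combined with the previous paragraph, it is $\mathbf\Pi^0_3$-complete.

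Since this is a folklore lemma there is no genuine obstacle; the only point deserving attention is the justification that the target set $Q$ is $\mathbf\Pi^0_3$-\emph{complete} (not merely $\mathbf\Pi^0_3$), which is where the two standard facts from \cite{kechris95} enter, together with the small observation that it is exactly the rows of $z$ of infinite support — present among the rows of $f(z)$ but absent from those of $y$ — that defeat $f(z)\,F_2\,y$ when $z\notin Q$.
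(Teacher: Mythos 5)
Your proposal is correct and is essentially the paper's proof: your set $Q$ is exactly the set $P_3=\{x:(\forall n)(\exists k)(\forall m\geq k)\,x(n,m)=0\}$ used in the paper, and your map $f$ (interleaving the rows of $z$ with a fixed enumeration of the eventually zero sequences) is precisely the paper's $x\mapsto x\oplus z$ reduction. The only differences are cosmetic: you spell out the $\mathbf\Pi^0_3$ upper bound and the completeness of $Q$, which the paper leaves to the citation of \cite[23.A]{kechris95}.
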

\begin{proof}
Recall from \cite[23.A]{kechris95} that the set
$$
P_3=\{x\in 2^{\N\times\N}:(\forall n)(\exists k)(\forall m\geq k) x(n,m)=0\}
$$
is $\mathbf\Pi^0_3$-complete. For $x,y\in 2^{\N\times\N}$, let $x\oplus y\in 2^{\N\times\N}$ be defined by $(x\oplus y)(2k-1,\cdot)=x(k,\cdot)$ and $(x\oplus y)(2k,\cdot)=y(k,\cdot)$, and note that $(x,y)\mapsto x\oplus y$ is continuous. Fix $z\in 2^{\N\times\N}$ such that $z(n,\cdot)$ enumerates all eventually zero sequences in $2^\N$. Then the map
$$
f: 2^{\N\times\N}\to 2^{\N\times\N}\times 2^{\N\times\N}: x\mapsto (x\oplus z,z)
$$
is a continuous reduction of $P_3$ to $F_2$ since $x\in P_3$ iff $(x\oplus z) F_2 z$.
\end{proof}


\begin{proof}[Proof of Proposition \ref{prop.complete}]
It suffices to show that $F_2$ is continuously reducible to $\simeq$ in $\rep(\Gamma,H)$, i.e., there is a continuous $g:2^{\N\times\N}\to\rep(\Gamma,H)$ such that $xF_2y$ if and only if $g(x)\simeq g(y)$.

Let $\vN(H)$ denote the set of von Neumann algebras acting on $H$, and equip this with the Effros Borel structure, as defined in \cite{effros65}. Then it is easy to see that the map $\rep(\Gamma,H)\to\vN(H):\sigma\mapsto R_{\sigma}$ is Borel. It was shown in \cite{effros65} that the set of factors in $\vN(H)$ is a Borel, and so it follows that the set $\mathcal F\subseteq\rep(\Gamma,H)$ of factor representations is Borel. Consider the relation $\not\perp$ in $\mathcal F$. By Remark \ref{rem.transitivity}, $\not\perp$ is a Borel equivalence relation in $\mathcal F$. By assumption, $\not\perp$ has uncountably many classes in $\mathcal F$, and so it follows by Silver's dichotomy theorem (see \cite{silver80} or \cite{gao09}) that there is a continuous $f:2^\N\to\mathcal F$ such that $f(x)\perp f(y)$ whenever $x\neq y$. Further, we can clearly arrange that $f(x)\simeq\bigoplus_{n=1}^\infty f(x)$, the infinite direct sum of countably many copies of $f(x)$. Now define $g:2^{\N\times\N}\to\rep(\Gamma,\bigoplus_{n=1}^\infty H)$ by
$$
g(x)=\bigoplus_{n=1}^\infty f(x(n,\cdot)).
$$
Then $g$ is easily seen to be a continuous reduction of $F_2$ to $\simeq$ in $\rep(\Gamma,\bigoplus_{n=1}^\infty H)$.
\end{proof}

\begin{corollary}
For any countable discrete group $\Gamma$, the conjugacy relation in $\rep(\Gamma,H)$ is $\mathbf\Pi^0_3$-complete.
\end{corollary}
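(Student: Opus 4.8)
The plan is to combine the upper bound of Theorem~\ref{mainthm1} with Proposition~\ref{prop.complete}. By Theorem~\ref{mainthm1} the relation $\simeq$ on $\rep(\Gamma,H)$ is $F_{\sigma\delta}$, hence $\mathbf\Pi^0_3$, so the content of the corollary is the matching lower bound, $\mathbf\Pi^0_3$-hardness. (The statement is of course to be read for \emph{infinite} $\Gamma$: for a finite group $\simeq$ has only countably many classes and calls for a separate, easier analysis, which I do not address here.) Assuming $\Gamma$ countably infinite, Proposition~\ref{prop.complete} reduces the whole matter to a single claim, namely that \emph{$\Gamma$ has uncountably many pairwise disjoint primary representations}.

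I would establish this in two steps: first produce uncountably many pairwise disjoint primary representations of $\Gamma$ on arbitrary separable Hilbert spaces, and then pass to countably infinite ampliations $\sigma_i=\bigoplus_{n\in\N}\rho_i\in\rep(\Gamma,H)$. This passage is harmless, since an ampliation of a primary representation is primary, and if $\rho_i\perp\rho_j$ then $\sigma_i\perp\sigma_j$: a nonzero intertwiner of the ampliations would, entry by entry, give a nonzero intertwiner of $\rho_i$ and $\rho_j$. For the first step I would split into cases according to Thoma's theorem on type $\I$ discrete groups. If $\Gamma$ is \emph{not} virtually abelian, then $C^*(\Gamma)$ is not of type $\I$, and by Glimm's theorem its unitary dual is then uncountable; hence $\Gamma$ has uncountably many pairwise inequivalent irreducible unitary representations, and these are automatically pairwise disjoint (Schur) and primary (type $\I$). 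If $\Gamma$ \emph{is} virtually abelian, fix an infinite abelian \emph{normal} subgroup $A\trianglelefteq\Gamma$ of finite index (pass to the normal core of an abelian finite-index subgroup). Then $\widehat A$ is an infinite compact metrizable group, hence uncountable, and $\Gamma$ acts on $\widehat A$ with all orbits of size $\leq[\Gamma:A]<\infty$, so there are uncountably many $\Gamma$-orbits in $\widehat A$. For each $\chi\in\widehat A$, the representation $\Ind_A^\Gamma\chi$ is finite-dimensional, and by Frobenius reciprocity together with Mackey's restriction formula (which, for the normal subgroup $A$, says that $\Ind_A^\Gamma\chi$ restricted to $A$ is the direct sum of the $\Gamma$-conjugates of $\chi$), the representations $\Ind_A^\Gamma\chi$ and $\Ind_A^\Gamma\chi'$ share an irreducible constituent if and only if $\chi$ and $\chi'$ lie in the same $\Gamma$-orbit. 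Picking one irreducible constituent from $\Ind_A^\Gamma\chi$ for each orbit then yields uncountably many pairwise inequivalent, hence pairwise disjoint, irreducible representations of $\Gamma$.

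With the base family in hand, the ampliations $\sigma_i$ furnish uncountably many pairwise disjoint primary representations inside $\rep(\Gamma,H)$, and Proposition~\ref{prop.complete} concludes that $\simeq$ is $\mathbf\Pi^0_3$-complete.

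The main obstacle is precisely the production of uncountably many pairwise disjoint primary representations for an \emph{arbitrary} infinite countable $\Gamma$, in particular for pathological groups such as infinite simple groups or Tarski monsters, which possess no infinite abelian quotient and for which no single uniform construction (characters, the regular representation, Koopman representations of Bernoulli actions, and so on) manifestly works. This is exactly what forces the appeal to Thoma's classification of type $\I$ discrete groups together with Glimm's dichotomy (whose content here is that a separable $C^*$-algebra which is not type $\I$ — having the CAR algebra as a subquotient — has uncountable dual); granted that input, everything else is routine bookkeeping.
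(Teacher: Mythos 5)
Your proposal is correct and follows essentially the same route as the paper: the Thoma/Glimm dichotomy (virtually abelian versus non-type-I) produces uncountably many pairwise disjoint primary representations, and Proposition~\ref{prop.complete} together with Theorem~\ref{mainthm1} finishes the argument. You merely fill in details the paper leaves implicit (the induced-representation argument in the virtually abelian case, the ampliation bookkeeping, and the caveat that $\Gamma$ must be infinite), all of which are fine.
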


\begin{proof}
If $\Gamma$ is type I, it follows from \cite{thoma68} that it is Abelian by finite, and so it has uncountably many spectrally disjoint unitary representations, and the above argument applies. If $\Gamma$ is not type I, then it has uncountably many non-isomorphic irreducible unitary representations (see \cite{glimm61}) and so Proposition \ref{prop.complete} applies.
\end{proof}

\end{remark}

\begin{remark}
Simon Thomas has pointed out the following interesting consequence of Proposition \ref{prop.complete}:

\begin{corollary}\label{c.simon}
If $\Gamma$ has uncountably many disjoint primary representations then the conjugacy relation in $\rep(\Gamma,H)$ is not Borel reducible to the conjugacy relation on \emph{irreducible} unitary representations of $\Gamma$.
\end{corollary}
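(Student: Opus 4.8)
The plan is to deduce Corollary \ref{c.simon} from Proposition \ref{prop.complete} by comparing potential (that is, Borel-reducibility-invariant) complexities. The first observation is that the map $g(x)=\bigoplus_{n=1}^\infty f(x(n,\cdot))$ constructed in the proof of Proposition \ref{prop.complete} is a \emph{continuous reduction of the equivalence relation} $F_2$ to $\simeq$ on $\rep(\Gamma,\bigoplus_{n=1}^\infty H)$; since $\bigoplus_{n=1}^\infty H$ is a separable infinite dimensional Hilbert space, this conjugacy relation is Borel bi-reducible with $\simeq$ on $\rep(\Gamma,H)$, so $F_2\leq_B(\simeq \text{ on } \rep(\Gamma,H))$. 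On the other hand, it is well known that $F_2$ — equivalently the Friedman--Stanley jump $=^{+}$ — is not Borel reducible to any $F_\sigma$, nor indeed to any $\mathbf{\Sigma}^0_3$, equivalence relation; that is, $F_2$ is not \emph{potentially} $\mathbf{\Sigma}^0_3$. Since potential complexity is monotone under $\leq_B$, it follows that $\simeq$ on $\rep(\Gamma,H)$ is not potentially $\mathbf{\Sigma}^0_3$ either.

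It therefore suffices to show that $\simeq$ on $\Irr(\Gamma,H)$ \emph{is} potentially $\mathbf{\Sigma}^0_3$ — very plausibly even potentially $F_\sigma$ — for then no Borel reduction of $\simeq$ on $\rep(\Gamma,H)$ to $\simeq$ on $\Irr(\Gamma,H)$ can exist. The starting point is that on irreducibles conjugacy coincides with $\not\perp$: by Schur's Lemma and the polar decomposition (Lemma \ref{l.polardecomp}), any nonzero intertwiner of two irreducible representations is a scalar multiple of a unitary, so for $\sigma,\pi\in\Irr(\Gamma,H)$ one has $\sigma\simeq\pi\iff \sigma\not\perp\pi$, and hence $\simeq$ on $\Irr(\Gamma,H)$ is Borel by Effros's theorem \cite{effros65}. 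To upgrade ``Borel'' to the required bound on potential complexity I would use the GNS picture: fix a unit vector $e_0\in H$; since every nonzero vector is cyclic for an irreducible representation, each $\sigma\in\Irr(\Gamma,H)$ is the cyclic representation associated with the positive-type function $\gamma\mapsto\langle\sigma_\gamma e_0,e_0\rangle$, and $\sigma\simeq\pi$ holds if and only if there is $\eta\in H$ with $\langle\sigma_\gamma e_0,e_0\rangle=\langle\pi_\gamma\eta,\eta\rangle$ for all $\gamma\in\Gamma$. One then exploits that $PU(H)=U(H)/\mathbb{T}$ acts \emph{freely} on $\Irr(\Gamma,H)$ — the stabilizer of an irreducible in $U(H)$ is precisely $\mathbb{T}\cdot 1$ — so that, after a measurable selection of the witnessing $\eta$ (equivalently, of a witnessing element of $PU(H)$) on the pieces where it exists, one gets a Borel reduction of $\simeq$ on $\Irr(\Gamma,H)$ to an $F_\sigma$ equivalence relation. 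Combined with the previous paragraph, this shows $\simeq$ on $\rep(\Gamma,H)$ is not Borel reducible to $\simeq$ on $\Irr(\Gamma,H)$, which is the corollary.

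The step I expect to be the main obstacle is exactly this potential-complexity bound on $\simeq$ on $\Irr(\Gamma,H)$. Merely writing ``there exists a unitary intertwiner'' only yields analyticity, since one is then projecting a $G_\delta$ condition along the norm-unit ball of $H$, which is not compact; so one really has to use the rigidity of the irreducible case — the GNS correspondence, the cyclicity of every nonzero vector, and the freeness of the $PU(H)$-action — together with a genuinely Borel selection of the intertwining unitary, in order to see that conjugacy on irreducibles cannot absorb $F_2$. An alternative would be to extract the needed bound directly from the internals of Effros's proof that $\perp$ is Borel. Everything else — the reduction $F_2\leq_B(\simeq \text{ on } \rep)$, the non-reducibility of $F_2$ to $\mathbf{\Sigma}^0_3$ equivalence relations, and the monotonicity of potential complexity — is standard or already in hand.
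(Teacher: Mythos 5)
Your overall architecture is sound and the first half matches the paper exactly: the reduction $F_2\leq_B(\simeq\text{ on }\rep(\Gamma,H))$ is precisely what the proof of Proposition \ref{prop.complete} provides, so everything hinges on showing that conjugacy on $\Irr(\Gamma,H)$ is too simple to absorb $F_2$. The genuine gap is the step you yourself flag: you never actually establish the complexity bound for $\simeq$ on $\Irr(\Gamma,H)$, and the route you sketch (GNS, freeness of the $PU(H)$-action, a measurable selection of the intertwining unitary) is both incomplete and unnecessary. Your stated worry --- that ``there exists a unitary intertwiner'' only projects along the non-compact unit ball of $H$ --- dissolves once you notice that by Schur's lemma you do not need a \emph{unitary} intertwiner, only a \emph{nonzero} one: for $\sigma,\pi$ irreducible, $\sigma\simeq\pi$ iff $R_{\sigma,\pi}\neq\{0\}$ iff for some $\varepsilon\in\Q_+$ and some $\xi,\eta$ in a countable dense set there is $T\in\mathcal B^1(H)$ with $R^1(\sigma,\pi,T)$ and $|\langle T\xi,\eta\rangle|\geq\varepsilon$. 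The inner condition is weakly closed in $T$ by Lemma \ref{l.relation_R}, and $\mathcal B^1(H)$ \emph{is} weakly compact, so by Lemma \ref{l.topological} each such set is closed and the relation is $F_\sigma$ on the nose --- no selection theorem, no GNS, no change of topology. (This is exactly what the paper extracts from Effros; your ``alternative'' last sentence is in fact the intended argument.)

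Granting that fix, your proof goes through, but by a genuinely different non-reducibility invariant than the paper's. You propose to use potential complexity: $F_2\ (=\,=^+)$ is not potentially $\mathbf{\Sigma}^0_3$ (Hjorth--Kechris--Louveau), while an $F_\sigma$ relation is trivially potentially $\mathbf{\Sigma}^0_2$. That is a correct, citable theorem, and it even yields a slightly stronger conclusion (non-reducibility to any potentially $\mathbf{\Sigma}^0_3$ target, not just to conjugacy on irreducibles). The paper instead uses the pinned/unpinned dichotomy: every $\mathbf{\Sigma}^0_2$ equivalence relation is pinned, $F_2$ is unpinned, and unpinnedness is downward-closed under $\leq_B$ (Kanovei, Theorem 17.1.3). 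The pinnedness route is lighter --- it needs only the $F_\sigma$ bound you already have and a single textbook citation --- whereas the potential-complexity route invokes a harder theorem but situates the result in the potential Borel hierarchy. Either works once the $F_\sigma$ computation for $\Irr(\Gamma,H)$ is in place; without it, neither does.
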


\begin{proof}
We refer to \cite{kanovei08} for the relevant notions. The conjugacy relation for irreducible unitary representations is $F_\sigma$ (see e.g. \cite{effros75}), hence it is a pinned equivalence relation by \cite[Theorem 17.1.3]{kanovei08}. However, by that same theorem the equivalence relation $F_2$ is not pinned and not Borel reducible to any pinned equivalence relation.
\end{proof}

That the conclusion of Corollary \ref{c.simon} holds when $\Gamma$ is a discrete countable group of type I is well-known. That this also holds when $\Gamma$ is not type I seems not to have been known.
\end{remark}

\bibliographystyle{amsplain}
\bibliography{descrunitrep}

\bigskip

{\smaller \sc
\noindent  Department of Mathematics and Statistics, University of Melbourne\\
Parkville, 3010 Victoria, Australia
}

\smallskip

{\smaller\sc
\noindent Mathematics Department, University of California, Los Angeles\\
Box 951555, Los Angeles, CA 90095-1555, USA
}

{\smaller \tt \noindent greg@math.ucla.edu}

\bigskip

{\smaller \sc
\noindent  Kurt G\"odel Research Center, University of Vienna\\
W\"ahringer Strasse 25, 1090 Vienna, Austria
}

\smallskip

{\smaller\sc
\noindent Department of Mathematics, University of Copenhagen\\
Universitetsparken 5, 2100 Copenhagen, Denmark
}

{\smaller \tt \noindent asgert@math.ku.dk}

\end{document}